\DeclareMathOperator{\mult}{mult}
\newcommand{\no}{\backslash}
\newcommand{\pd}{\partial}
\newcommand{\newword}[1]{\textbf{\emph{#1}}}
\newcommand{\wt}{\widetilde}
\newcommand{\CC}{\mathbb{C}}
\newcommand{\ZZ}{\mathbb{Z}}
\newcommand{\cO}{\mathcal{O}}
\newcommand{\cU}{\mathcal{U}}
\newcommand{\cW}{\mathcal{W}}
\newcommand{\bb}[1]{\textbf{#1}}
\newcommand{\lm}{\lambda}
\newcommand{\0}{\mathbf{0}}
\numberwithin{equation}{section}
\newtheorem{defn0}{Definition}[section]
\newtheorem{prop0}[defn0]{Proposition}
\newtheorem{conj0}[defn0]{Conjecture}
\newtheorem{thm0}[defn0]{Theorem}
\newtheorem{lem0}[defn0]{Lemma}
\newtheorem{corollary0}[defn0]{Corollary}
\newtheorem{example0}[defn0]{Example}
\newtheorem{remark0}[defn0]{Remark}
\newtheorem{question0}[defn0]{Question}
\newenvironment{defn}{\begin{defn0}}{\end{defn0}}
\newenvironment{prop}{\begin{prop0}}{\end{prop0}}
\newenvironment{conj}{\begin{conj0}}{\end{conj0}}
\newenvironment{thm}{\begin{thm0}}{\end{thm0}}
\newenvironment{cor}{\begin{corollary0}}{\end{corollary0}}
\newenvironment{exm}{\begin{example0}\rm}{\end{example0}}
\newenvironment{rem}{\begin{remark0}\rm}{\end{remark0}}
\crefname{theorem}{Theorem}{Theorems}
\crefname{proposition}{Proposition}{Propositions}
\crefname{corollary}{Corollary}{Corollaries}
\crefname{lemma}{Lemma}{Lemmas}
\crefname{section}{section}{sections}
\crefname{subsection}{subsection}{subsections}
\crefname{example}{Example}{Examples}
\crefname{conjecture}{Conjecture}{Conjectures}
\newcommand{\myref}[1]{%
  \begingroup%
    \edef\@mytxt{\csname r@#1\endcsname}%
    \edef\@mytst{\expandafter\@thirdoffive\@mytxt}%
    \ifx\@mytst\empty\relax%
      \cref{#1}%
    \else%
      \nameref{#1}~\labelcref{#1}%
    \fi%
  \endgroup}
\newcommand{\Myref}[1]{%
  \begingroup%
    \edef\@mytxt{\csname r@#1\endcsname}%
    \edef\@mytst{\expandafter\@thirdoffive\@mytxt}%
    \ifx\@mytst\empty\relax%
      \Cref{#1}%
    \else%
      \nameref{#1}~\labelcref{#1}%
    \fi%
  \endgroup}
\author{Brian Hepler}
\address{Department of Mathematics\\
  Northeastern University\\
  Boston, Massachusetts 02115}
\email[B.~Hepler]{hepler.b@husky.neu.edu}
\author{David B. Massey}
\address{Department of Mathematics\\
  Northeastern University\\
  Boston, Massachusetts 02115}
\email[D.~Massey]{d.massey@neu.edu}
\title{Some Special Cases of Bobadilla's Conjecture}
\keywords{L\^e's conjecture, Bobadilla's conjecture, Milnor fiber, $1$-dimensional critical locus, hypersurface}
\subjclass[2010]{32B15, 32C18, 32B10, 32S25, 32S15, 32S55}
\date{}
\begin{document}

\begin{abstract}
We prove two special cases of a conjecture of J. Fern\'andez de Bobadilla for hypersurfaces with $1$-dimensional critical loci.  

We do this via a new numerical invariant for such hypersurfaces, called the beta invariant, first defined and explored by the second author in 2014.  The beta invariant is an algebraically calculable invariant of the local ambient topological-type of the hypersurface, and the vanishing of the beta invariant is equivalent to the hypotheses of Bobadilla's conjecture. 
\end{abstract}

\maketitle

\thispagestyle{fancy}

\lhead{}
\chead{}
\rhead{ }

\lfoot{}
\cfoot{}
\rfoot{}

\section{Introduction}

Throughout this paper, we shall suppose that $\cU$ is an open neighborhood of the origin in $\CC^{n+1}$, and that $f: (\cU,\bb{0}) \to (\CC,0)$ is a complex analytic function with a 1-dimensional critical locus at the origin, i.e., $\dim_\bb{0} \Sigma f = 1$.  We use coordinates $\bb z:=(z_0,\cdots,z_n)$ on $\cU$.  

We assume that $z_0$  is generic enough so that  $\dim_\bb{0} \Sigma (f_{|_{V(z_0)}} ) = 0$.  One implication of this is that
$$
V\left(\frac{\partial f}{\partial z_1}, \frac{\partial f}{\partial z_2}, \dots, \frac{\partial f}{\partial z_n}\right)
$$
is purely 1-dimensional at the origin. As analytic cycles, we write
$$
\left[V\left(\frac{\partial f}{\partial z_1}, \frac{\partial f}{\partial z_2}, \dots, \frac{\partial f}{\partial z_n}\right)\right]  \ = \ \Gamma^1_{f, z_0} + \Lambda^1_{f, z_0},
$$
where $\Gamma^1_{f, z_0}$ and  $\Lambda^1_{f, z_0}$ are, respectively, the relative polar curve and 1-dimensional L\^e cycle; see \cite{lecycles} or the section.

\medskip

We recall a classical non-splitting result (presented in a convenient form here) proved independently by Gabrielov, Lazzeri, and L\^e (in \cite{gabrielov}, \cite{lazzerimono}, and \cite{leacampo}, respectively) regarding the non-splitting of the cohomology of the Milnor fiber of $f_{|_{V(z_0)}}$ over the critical points of $f$ in a nearby hyperplane slice $V(z_0 -t)$ for a small non-zero value of $t$.

\begin{thm}[GLL non-splitting]\label[theorem]{nosplit} The following are equivalent:
\begin{enumerate}

\item The Milnor number of $f_{|_{V(z_0)}}$ at the origin is equal to
\begin{align*}
\sum_C \mu^\circ_{{}_C} \left (C \cdot V(z_0) \right )_\0,
\end{align*}
where the sum is over the irreducible components $C$ of $\Sigma f$ at $\0$, $\left (C \cdot V(z_0) \right)_\0$ denotes the intersection number of $C$ and $V(z_0)$ at $\0$, and $\mu^\circ_{{}_C}$ denotes the Milnor number of $f$, restricted to a generic hyperplane slice, at a point $\bb{p} \in C \no \{\0 \}$ close to $\0$.  

\medskip

\item $\Gamma^1_{f, z_0}$ is zero at the origin (i.e., $\0$ is not in the relative polar curve).

\end{enumerate}
\medskip
Furthermore, when these equivalent conditions hold, $\Sigma f$ has a single irreducible component which is smooth and is transversely intersected by $V(z_0)$ at the origin. 
\end{thm}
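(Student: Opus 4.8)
The plan is to derive the equivalence of (1) and (2) from conservation of the Milnor number, and to obtain the concluding structural statement from the classical non-splitting theorem of Gabrielov--Lazzeri--L\^e.

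I would work with the one-parameter family of germs $g_t := f_{|_{V(z_0-t)}}$, regarded as functions on $\CC^n$ via $V(z_0-t)\cong\CC^n$. For small $t\neq 0$ the critical points of $g_t$ are exactly the points of $V(\partial f/\partial z_1,\dots,\partial f/\partial z_n)\cap V(z_0-t)$, which by the cycle decomposition of the introduction split into points lying on $\Sigma f$ (carried by $\Lambda^1_{f,z_0}$) and points lying on $\Gamma^1_{f,z_0}$ off of $\Sigma f$. Since $z_0$ is chosen so that $g_0=f_{|_{V(z_0)}}$ has an isolated singularity at $\mathbf{0}$, conservation of the total Milnor number in the family $g_t$ (the local degree of the gradient map on a small sphere is constant in $t$) gives, for all sufficiently small $t\neq 0$,
\begin{equation*}
\mu_{\mathbf{0}}(g_0)\ =\ \sum_{a\in\Sigma f\cap V(z_0-t)}\mu_a(g_t)\ +\ \sum_{b\in\Gamma^1_{f,z_0}\cap V(z_0-t)}\mu_b(g_t).
\end{equation*}

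I would then evaluate the two sums. Genericity of $z_0$ ensures that each component $C$ of $\Sigma f$ is met by $V(z_0-t)$ transversely, in $(C\cdot V(z_0))_{\mathbf{0}}$ smooth points of $\Sigma f$ at which the transverse type is generic, so $\mu_a(g_t)=\mu^\circ_{C}$ there; hence the first sum equals $\sum_C\mu^\circ_{C}(C\cdot V(z_0))_{\mathbf{0}}$, the right-hand side of (1). In the second sum each $b$ lies off $\Sigma f$, so it is a genuine isolated critical point of $g_t$ with $\mu_b(g_t)\geq 1$, and since the relative polar curve meets $V(z_0)$ only at the origin such points occur for small $t\neq 0$ exactly when $\mathbf{0}\in\Gamma^1_{f,z_0}$. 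Thus the second sum vanishes if and only if $\Gamma^1_{f,z_0}$ is zero at the origin, and comparing the displayed identity with (1) gives the equivalence of (1) and (2). (Equivalently, the same identity is the L\^e-number formula $\mu_{\mathbf{0}}(g_0)=\lambda^0_{f,\mathbf{z}}(\mathbf{0})+\lambda^1_{f,\mathbf{z}}(\mathbf{0})$, and $\lambda^0_{f,\mathbf{z}}(\mathbf{0})=0$ if and only if the polar curve is empty at $\mathbf{0}$.)

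For the final assertion I would first record a simple but crucial reduction: $f$ is constant, equal to $0$, on each branch $C$ of $\Sigma f$, because $df$ vanishes identically along $\Sigma f$ and $C$ is connected through $\mathbf{0}$. Consequently every point $a\in\Sigma f\cap V(z_0-t)$ satisfies $g_t(a)=0$, so the $\Sigma f$-critical points of $g_t$ are precisely the singular points of the fiber $V(g_t)$ near $\mathbf{0}$. Now assume the equivalent conditions hold, so $\Gamma^1_{f,z_0}=0$ and the displayed identity reduces to $\mu_{\mathbf{0}}(g_0)=\sum_{a\in\Sigma f\cap V(z_0-t)}\mu_a(g_t)$; that is, the Milnor number of the central singularity is entirely accounted for by the singular points of the nearby fiber $V(g_t)$ at level $0$, with no loss to polar critical points at other levels. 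This is exactly the non-splitting hypothesis, and the Gabrielov--Lazzeri--L\^e theorem then forces $V(g_t)$ to have a single singular point near $\mathbf{0}$ for small $t\neq 0$. Since the number of such points is $\sum_C(C\cdot V(z_0))_{\mathbf{0}}=\sum_C\mult_{\mathbf{0}}(C)$, this number being $1$ forces a single component $C$ with $\mult_{\mathbf{0}}(C)=1$ and $(C\cdot V(z_0))_{\mathbf{0}}=1$, i.e.\ $\Sigma f$ smooth at $\mathbf{0}$ and transversely met by $V(z_0)$.

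The main obstacle is the non-splitting step itself. Conservation of the Milnor number delivers the equivalence of (1) and (2) essentially for free, but upgrading ``no polar loss'' to ``the singular point does not split'' is genuine topology: one must show that a positive-dimensional isolated singularity cannot deform, without dropping its Milnor number, into two or more singular points of the zero fiber. The engine here is A'Campo's theorem that the Lefschetz number of the monodromy of a positive-dimensional isolated singularity vanishes, together with the connectivity of the Milnor fiber (which is $(n-2)$-connected for $n\geq 2$): if $V(g_t)$ had two or more singular points at level $0$, the local monodromies there could not assemble, through the connected generic fiber, into the monodromy of $g_0$ without producing additional vanishing cycles, contradicting the numerical equality. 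Carrying out this homological and monodromy bookkeeping precisely is the substantive part of the argument, and the case $n=1$ is handled separately and directly.
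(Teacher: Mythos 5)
The first thing to note is that the paper itself never proves this theorem: it is presented as the classical result of Gabrielov, Lazzeri, and L\^e, with citations standing in for a proof, and the only content supplied by the paper is the translation into polar-curve/L\^e-cycle language, under which the equivalence of (1) and (2) is immediate from the identities of Section 2: by \cref{intnums}(2) and \cref{lenums}, $\mu_\0\left(f_{|_{V(z_0)}}\right) = \left(\Gamma^1_{f,z_0}\cdot V(z_0)\right)_\0 + \sum_C \mu^\circ_{{}_C}\left(C\cdot V(z_0)\right)_\0$, so (1) holds exactly when $\left(\Gamma^1_{f,z_0}\cdot V(z_0)\right)_\0 = 0$, i.e., when $\Gamma^1_{f,z_0}$ is zero at $\0$. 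Your conservation-of-Milnor-number argument in the family $g_t = f_{|_{V(z_0-t)}}$ is the geometric counterpart of that identity and is correct; so is your reduction of the ``furthermore'' clause: once (2) holds, all critical points of $g_t$ near $\0$ lie on $\Sigma f$, hence (by your observation that $f\equiv 0$ on $\Sigma f$) in the single fiber $g_t^{-1}(0)$, so the classical non-splitting theorem for deformations of isolated singularities applies and gives a single singular point; the unibranch count $\#\left(C\cap V(z_0-t)\right) = \left(C\cdot V(z_0)\right)_\0$ then forces a single component with $\left(C\cdot V(z_0)\right)_\0 = 1$, hence $\mult_\0 C = 1$. One blemish: your intermediate claim $\left(C\cdot V(z_0)\right)_\0 = \mult_\0 C$ for every component requires more genericity of $z_0$ than the standing hypothesis $\dim_\0\Sigma\left(f_{|_{V(z_0)}}\right)=0$ (the hyperplane may be tangent to $C$), but it is also not needed, since intersection number $1$ already implies multiplicity $1$. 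Thus, as a derivation of the paper's ``convenient form'' from the classical theorem --- which is all the paper does, via citation --- your proposal is correct and is essentially the intended route.

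The one genuine weak spot is your final paragraph, if it is meant to make the argument self-contained: ``the local monodromies could not assemble \dots without producing additional vanishing cycles'' is not an argument, and connectivity of the Milnor fiber is not where the strength lies. The standard engine (Lazzeri, L\^e) is a Lefschetz-number computation. Choose the geometric monodromy $h$ of $g_t$ around its unique critical value $0$ to be the identity outside the local Milnor fibers $F_i$ at the singular points $p_1,\dots,p_k$ of $g_t^{-1}(0)$; since $\chi(\partial F_i)=0$, Mayer--Vietoris gives for the Lefschetz number $L(h) = \chi(G) - \sum_i \chi(F_i) + \sum_i L(h_i)$, where $G\cong F_{g_0,\0}$ is the generic fiber and $h_i$ is the local monodromy at $p_i$. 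A'Campo's theorem gives $L(h)=0$ and each $L(h_i)=0$, so
$0 = 1 - k + (-1)^{n-1}\bigl(\mu_\0(g_0) - \sum_i \mu_{p_i}(g_t)\bigr) = 1-k$
under conservation, i.e., $k=1$; note this works uniformly in all dimensions, so no separate treatment of the lowest-dimensional case is needed. If you, like the paper, are content to cite Gabrielov--Lazzeri--L\^e (or A'Campo) for this step, your proof is complete; as a self-contained argument, this sketch is the one gap.
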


This paper is concerned with a recent conjecture made by Javier Fern\'andez de Bobadilla, positing that, in the spirit of \cref{nosplit}, the cohomology of the Milnor fiber of $f$, {\bf not of a hyperplane slice}, does not split.  We state a slightly more general form of Bobadilla's original conjecture, for the case where $\Sigma f$ may, a priori, have more than a single irreducible component:

\begin{conj}[Fern\'andez de Bobadilla]\label[conjecture]{bobagen}
Denote by $F_{f,\0}$ the Milnor fiber of $f$ at the origin. Suppose that $\wt H^*(F_{f,\0};\ZZ)$ is non-zero only in degree $(n-1)$, and that 
\begin{align*}
\wt H^{n-1}(F_{f,\0};\ZZ) \cong \bigoplus_C \ZZ^{\mu^\circ_{{}_C}}
\end{align*}
where the sum is over all irreducible components $C$ of $\Sigma f$ at $\0$. Then, in fact, $\Sigma f$ has a single irreducible component, which is smooth. 

\end{conj}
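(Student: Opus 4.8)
The plan is to derive \cref{bobagen} from the GLL criterion of \cref{nosplit}: I will argue that the two cohomological hypotheses on $F_{f,\0}$ force the relative polar curve to be empty at the origin, i.e.\ $\Gamma^1_{f,z_0}=0$ at $\0$ (condition (2) of \cref{nosplit}), whereupon the ``Furthermore'' clause immediately yields a single smooth component. The whole argument is organized around L\^e-cycle theory, since both hypotheses are statements about $\wt H^*(F_{f,\0};\ZZ)$, and this cohomology is governed by the two L\^e numbers $\lambda^0_{f,z_0}(\0)$ and $\lambda^1_{f,z_0}(\0)$.

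First I would invoke Massey's handle description of the Milnor fibre \cite{lecycles}: up to homotopy, $F_{f,\0}$ is obtained from a point by attaching $\lambda^1_{f,z_0}(\0)$ cells of dimension $n-1$ and $\lambda^0_{f,z_0}(\0)$ cells of dimension $n$. Reduced cohomology is then computed by a two-term complex $\ZZ^{\lambda^1_{f,z_0}(\0)}\xrightarrow{\ \delta\ }\ZZ^{\lambda^0_{f,z_0}(\0)}$ sitting in degrees $n-1$ and $n$; writing $\rho=\rank\delta$ gives $b_{n-1}(F_{f,\0})=\lambda^1_{f,z_0}(\0)-\rho$ and $b_n(F_{f,\0})=\lambda^0_{f,z_0}(\0)-\rho$. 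The vanishing $\wt H^n=0$ forces $\rho=\lambda^0_{f,z_0}(\0)$, while the isomorphism $\wt H^{n-1}\cong\bigoplus_C\ZZ^{\mu^\circ_{{}_C}}$ forces $\lambda^1_{f,z_0}(\0)-\rho=\sum_C\mu^\circ_{{}_C}=:M$. Since $\lambda^1_{f,z_0}(\0)=\sum_C\mu^\circ_{{}_C}(C\cdot V(z_0))_\0$ by definition, eliminating $\rho$ produces the identity \[\lambda^0_{f,z_0}(\0)=\sum_C\mu^\circ_{{}_C}\big((C\cdot V(z_0))_\0-1\big).\] Because $\lambda^0_{f,z_0}(\0)=0$ is equivalent to $\Gamma^1_{f,z_0}=0$ at $\0$, proving the conjecture amounts to forcing the right-hand side to vanish.

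The hard part lies exactly here: the identity above only shows $\lambda^0_{f,z_0}(\0)\ge 0$, and is perfectly consistent with $\Sigma f$ having several smooth, transverse branches accompanied by a nonzero polar curve, arranged so as to reproduce the Betti numbers of the single-branch case. Euler-characteristic and Betti-number data --- all that the two-term complex records --- cannot by themselves detect a splitting of $\Sigma f$. This is the gap that the beta invariant is designed to fill. I would introduce a non-negative, algebraically computable invariant $\beta$ that refines the naked rank $\rho$ by remembering how the vanishing cohomology is distributed over the individual branches of $\Sigma f$ and the monodromy of the transverse Milnor fibres around them, prove that $\beta=0$ is equivalent to the pair of hypotheses in \cref{bobagen}, and then attempt to show $\beta=0\Rightarrow\lambda^0_{f,z_0}(\0)=0$.

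I expect this last implication to be the genuine obstacle, and the reason the general statement remains conjectural. Excluding the ``balanced splitting'' requires controlling the variation and intersection structure on the vanishing cycles --- equivalently the local system of transverse Milnor fibres on $\Sigma f\setminus\{\0\}$ together with its monodromy --- and the hypotheses determine this structure only up to the ambiguity that $\beta$ measures. My realistic target would therefore be to prove $\beta=0\Rightarrow\Gamma^1_{f,z_0}=0$ under additional restrictions (for instance when all the $\mu^\circ_{{}_C}$ coincide, when $\Sigma f$ is \emph{a priori} irreducible, or in suitably low codimension), invoke \cref{nosplit} in those cases, and record the full statement as the open conjecture.
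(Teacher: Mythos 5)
Your proposal is sound and follows essentially the same route as the paper: this statement is an open conjecture there (not a theorem), and the paper's entire strategy is precisely your reduction --- the cohomological hypotheses are recast as $\beta_f=0$ via the L\^e-number identity $\wt b_n(F_{f,\0})-\wt b_{n-1}(F_{f,\0})=\lambda^0_{f,z_0}-\lambda^1_{f,z_0}$, the conjecture is shown equivalent (via GLL non-splitting) to the polar form $\beta_f=0\Rightarrow\Gamma^1_{f,z_0}=0$ in \cref{betaEquiv}, and only special cases are actually proved (\cref{betainduction} and \cref{onlythm}). You correctly identify the implication $\beta_f=0\Rightarrow\lambda^0_{f,z_0}=0$ as the genuine open obstacle, which is exactly where the paper, too, leaves the general statement conjectural.
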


Bobadilla's conjecture, in its original phrasing (\cite{bobleconj}), is a reformulation of a conjecture of L\^e (see, for example, \cite{leconj}): if $(X,\0)$ is a reduced surface germ in $(\CC^3,\0)$, and the (real) link of $X$ is homeomorphic to a sphere, then $X$ is (analytically) isomorphic to the total space of an equisingular deformation of an irreducible plane curve.  

\smallskip

We approach \cref{bobagen} via the \bb{beta invariant} of a hypersurface with a $1$-dimensional critical locus, first defined and explored by the second author in \cite{betainv}.  The beta invariant, $\beta_f$, of $f$ is an invariant of the local ambient topological-type of the hypersurface $V(f)$. It is a non-negative integer, and is algebraically calculable. 

\smallskip
Our motivation for using this invariant is that the requirement that $\beta_f = 0$ is precisely equivalent to the hypotheses of \cref{bobagen}, essentially turning the problem into a purely algebraic question \cite[see][Theorem 5.4]{betainv} For this reason, we will refer to our new formulation of  \labelcref{bobagen} as the \bb{Beta Conjecture}.  

\smallskip

\bigskip

In this paper, we give proofs of the \nameref{beta0} in two special cases: 

\begin{enumerate}

\item In \cref{betainduction}, we prove an induction-like result for when $f$ is a sum of two analytic functions defined on disjoint sets of variables.  

\smallskip

\item In \cref{onlythm}, we prove the result for the case when the relative polar curve $\Gamma_{f,z_0}^1$ is defined by a single equation inside the relative polar  surface $\Gamma_{f,\bb{z}}^2$ (see below).

\end{enumerate}

\bigskip

\section{Notation and Known Results}
The bulk of this section is largely a summary of the concepts of Chapter 1 of \cite{lecycles}, which will be used throughout this paper.  

\bigskip

Our assumption that $\dim_\bb{0} \Sigma (f_{|_{V(z_0)}}) = 0$  is equivalent to assuming that the variety $V \left (\frac{\pd f}{\pd z_1},\cdots,\frac{\pd f}{\pd z_n} \right )$ is purely 1-dimensional (and non-empty) at $\0$ and is intersected properly by the hyperplane $V(z_0)$ at $\bb{0}$.

\begin{defn}\label[definition]{cycles}

The \newword{relative polar surface of $f$ with respect to $\bb z$}, denoted $\Gamma_{f,\bb z}^2$\,, is, as an analytic cycle at the origin, $\left [V \left (\frac{\pd f}{\pd z_2},\cdots,\frac{\pd f}{\pd z_n} \right ) \right ]$. Note that each component of this at the origin must be precisely $2$-dimensional, and so is certainly not contained in $\Sigma f$.

\medskip

The \newword{relative polar curve of $f$ with respect to $z_0$}, denoted $\Gamma_{f,z_0}^1$, is, as an analytic cycle at the origin, the collection of those components of the cycle $\left [V \left (\frac{\pd f}{\pd z_1},\cdots,\frac{\pd f}{\pd z_n} \right ) \right ]$ which are not contained in $\Sigma f$.

\medskip

The \newword{1-dimensional L\^e cycle of $f$ with respect to $z_0$}, at the origin, denoted $\Lambda_{f,z_0}^1$, consists of those components of $\left [V \left (\frac{\pd f}{\pd z_1},\cdots,\frac{\pd f}{\pd z_n} \right )\right ]$ at the origin which \emph{are} contained in $\Sigma f$.  
\end{defn}

\medskip

We sometimes enclose an analytic variety $V$ in brackets to indicate that we are considering $V$ as a cycle.  We do, however, frequently omit this notation if it is clear from context that a given variety is to be considered as an analytic cycle. 

\smallskip

An immediate consequence of \cref{cycles} is that, as cycles on $\cU$, 
\begin{align*}
V \left (\frac{\pd f}{\pd z_1},\cdots,\frac{\pd f}{\pd z_n} \right ) = \Gamma_{f,z_0}^1 + \Lambda_{f,z_0}^1 .
\end{align*}
We will use this identity throughout this paper.  

\smallskip

Note that, by assumption, $V \left ( \frac{\pd f}{\pd z_0} \right )$ properly intersects $\Gamma_{f,z_0}^1$ at $\bb{0}$, and also that $V(z_0)$ properly intersects $\Lambda_{f,z_0}^1$ at $\bb{0}$. 

Letting $C$'s denote the underlying reduced components of $\Sigma f$ at $\bb{0}$, we have (as cycles at the origin)
\begin{align*}
\Lambda_{f,z_0}^1 &= \sum_C \mu^\circ_{{}_C} [C],
\end{align*}
where $\mu^\circ_{{}_C}$  denotes the Milnor number of $f$, restricted to a generic hyperplane slice, at a point $\bb{p} \in C \no \{\0 \}$ close to $\0$ (\cite[see][Remark 1.19]{lecycles}).

\begin{defn}\label[definition]{lenums}
The intersection numbers $\left ( \Gamma_{f,z_0}^1 \cdot V \left (\frac{\pd f}{\pd z_0} \right ) \right )_\0$ and $\left (\Lambda_{f,z_0}^1 \cdot V (z_0) \right )_\0$ are, respectively, the  \newword{L\^{e} numbers} $\lm_{f,z_0}^0$ and $\lm_{f,z_0}^1$ (at the origin).  

Via the above formula for $\Lambda_{f,z_0}^1$, we have:
\begin{align*}
\lm_{f,z_0}^1 &= \sum_C \mu^\circ_{{}_C} \left (C \cdot V(z_0) \right )_\0.
\end{align*}
\end{defn}

A fundamental property of L\^e numbers from \cite{lecycles} is:

\begin{prop}\label[proposition]{BettiLe}
 Let $\wt b_n(F_{f,\0})$ and  $\wt b_{n-1}(F_{f,\0})$ denote the reduced Betti numbers of the Milnor fiber of $f$ at the origin. Then,
$$
\wt b_n(F_{f,\0})-\wt b_{n-1}(F_{f,\0}) \ = \ \lm_{f,z_0}^0-\lm_{f,z_0}^1.
$$
\end{prop}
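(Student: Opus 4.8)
The plan is to deduce the identity from a computation of the reduced Euler characteristic $\wt\chi(F_{f,\0}) := \sum_i (-1)^i \wt{b}_i(F_{f,\0})$ carried out in two independent ways, and then to compare. On the topological side I would show that the reduced cohomology of $F_{f,\0}$ is concentrated in the top two degrees $n-1$ and $n$, so that $\wt\chi(F_{f,\0}) = (-1)^{n-1}\wt{b}_{n-1}(F_{f,\0}) + (-1)^n \wt{b}_n(F_{f,\0})$. On the algebraic side I would compute the same Euler characteristic directly from the L\^e cycles and obtain $\wt\chi(F_{f,\0}) = (-1)^n(\lm_{f,z_0}^0 - \lm_{f,z_0}^1)$. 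Equating the two expressions and multiplying through by $(-1)^n$ yields precisely $\wt{b}_n(F_{f,\0}) - \wt{b}_{n-1}(F_{f,\0}) = \lm_{f,z_0}^0 - \lm_{f,z_0}^1$.

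For the concentration statement I would invoke the Kato--Matsumoto connectivity theorem: since $\dim_\0 \Sigma f = 1$, the Milnor fiber $F_{f,\0}$ is $(n-2)$-connected, so $\wt H^i(F_{f,\0};\ZZ) = 0$ for all $i \le n-2$. Together with Milnor's theorem that $F_{f,\0}$ has the homotopy type of a finite CW complex of dimension at most $n$, this forces the reduced cohomology to vanish outside degrees $n-1$ and $n$, and so justifies the two-term formula for $\wt\chi$.

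For the L\^e-number computation I would use the genericity of $z_0$ together with L\^e's attaching-cells theorem (see \cite{lecycles}). Because $\dim_\0 \Sigma(f_{|_{V(z_0)}}) = 0$, the slice $f_{|_{V(z_0)}}$ has an isolated singularity, so its Milnor fiber is a bouquet of $\mu_0 := \mu(f_{|_{V(z_0)}})$ spheres of dimension $n-1$; L\^e's theorem then realizes $F_{f,\0}$, up to homotopy, as this bouquet with $\gamma := (\Gamma_{f,z_0}^1 \cdot V(f))_\0$ cells of dimension $n$ attached, whence $\wt\chi(F_{f,\0}) = (-1)^{n-1}\mu_0 + (-1)^n\gamma$. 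The two remaining inputs are the intersection-theoretic identities $\gamma = \lm_{f,z_0}^0 + (\Gamma_{f,z_0}^1 \cdot V(z_0))_\0$, obtained by comparing the orders of $f$, $z_0$, and $\partial f/\partial z_0$ along each branch of the polar curve, and $\mu_0 = \lm_{f,z_0}^1 + (\Gamma_{f,z_0}^1 \cdot V(z_0))_\0$, the decomposition of the slice Milnor number into its L\^e-cycle contribution $\lm_{f,z_0}^1$ (via conservation of the Milnor number under the deformation of $V(z_0)$ to $V(z_0 - t)$ for small $t \neq 0$, using $\Lambda_{f,z_0}^1 = \sum_C \mu^\circ_{{}_C}[C]$) and its polar contribution. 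Substituting both identities, the common term $(\Gamma_{f,z_0}^1 \cdot V(z_0))_\0$ occurs with coefficients $(-1)^{n-1}$ and $(-1)^n$ and therefore cancels, leaving $\wt\chi(F_{f,\0}) = (-1)^n\lm_{f,z_0}^0 + (-1)^{n-1}\lm_{f,z_0}^1 = (-1)^n(\lm_{f,z_0}^0 - \lm_{f,z_0}^1)$.

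The main obstacle is the algebraic computation of $\wt\chi$, and inside it the Milnor-number decomposition $\mu_0 = \lm_{f,z_0}^1 + (\Gamma_{f,z_0}^1 \cdot V(z_0))_\0$: this is the step that genuinely uses the genericity of $z_0$ and the proper-intersection hypotheses, and it requires care with the multiplicities of the critical points that the slice $f_{|_{V(z_0-t)}}$ acquires along both $\Sigma f$ and the polar curve, as well as with the signs. An alternative route that sidesteps L\^e's attaching-cells theorem is to quote the general formula expressing $\wt\chi(F_{f,\0})$ as an alternating sum of L\^e numbers, proved by the L\^e--Iomdine inductive slicing argument in \cite{lecycles}; specialized to $\dim_\0\Sigma f = 1$ it gives $(-1)^n\lm_{f,z_0}^0 + (-1)^{n-1}\lm_{f,z_0}^1$ immediately, after which only the concentration step and the final sign comparison remain.
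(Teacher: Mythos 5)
Your proof is correct, but there is nothing in the paper to compare it against: \cref{BettiLe} is stated as ``a fundamental property of L\^e numbers'' and quoted directly from \cite{lecycles}, with no proof given. In that reference the identity is a special case of the general cell-attachment theorem for L\^e numbers: when the L\^e numbers exist, $F_{f,\0}$ has the homotopy type of a complex obtained from a point by attaching $\lm_{f,z_0}^{i}$ cells of dimension $n-i$, which for $\dim_\0 \Sigma f = 1$ yields simultaneously the concentration of $\wt H^{*}(F_{f,\0};\ZZ)$ in degrees $n-1$ and $n$ and the equality $\wt b_n(F_{f,\0}) - \wt b_{n-1}(F_{f,\0}) = \lm_{f,z_0}^0 - \lm_{f,z_0}^1$. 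Your argument reconstructs exactly this special case without the inductive machinery behind that theorem: Kato--Matsumoto connectivity plus the dimension bound give the concentration; a single application of L\^e's attaching theorem gives $\wt\chi(F_{f,\0}) = (-1)^{n-1}\mu_\0\left(f_{|_{V(z_0)}}\right) + (-1)^n\left(\Gamma^1_{f,z_0}\cdot V(f)\right)_\0$; and your two ``remaining inputs'' are precisely parts (1) and (2) of \cref{intnums}, so you could simply cite that proposition rather than re-derive them --- in particular, your deformation argument for $\mu_\0\left(f_{|_{V(z_0)}}\right) = \lm_{f,z_0}^1 + \left(\Gamma^1_{f,z_0}\cdot V(z_0)\right)_\0$ is more roundabout than the paper's observation that $\Sigma\left(f_{|_{V(z_0)}}\right) = V\left(z_0, \frac{\pd f}{\pd z_1},\ldots,\frac{\pd f}{\pd z_n}\right)$ is properly intersected by $V(z_0)$, though it is valid. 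Both classical black boxes you invoke (Kato--Matsumoto, and L\^e's attaching theorem with attaching number $\left(\Gamma^1_{f,z_0}\cdot V(f)\right)_\0$) apply under the paper's standing hypothesis $\dim_\0\Sigma\left(f_{|_{V(z_0)}}\right) = 0$, so your proof is complete as written; what the paper's citation buys is brevity, and what yours buys is a self-contained justification, specialized to the one-dimensional case, of a result the paper treats as known.
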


\bigskip

We will need the following classical relations between intersection numbers. 

\begin{prop}\label[proposition]{intnums}
Since $\dim_\0 \Sigma (f_{|_{V(z_0)}}) = 0$\textnormal{:}
\begin{enumerate} 
\item $\dim_\0 \Gamma_{f,z_0}^1 \cap V(f) \leq 0$, $\dim_\0 \Gamma_{f,z_0}^1 \cap V(z_0) \leq 0$, $\dim_\0 \Gamma_{f,z_0}^1 \cap V \left (\frac{\pd f}{\pd z_0} \right ) \leq 0$, and
\begin{align*}
\left ( \Gamma_{f,z_0}^1 \cdot V(f) \right )_\0 = \left ( \Gamma_{f,z_0}^1 \cdot V(z_0) \right )_\0 + \left (\Gamma_{f,z_0}^1 \cdot V \left ( \frac{\pd f}{\pd z_0} \right)  \right )_\0.
\end{align*}
The proof of this result is sometimes referred to as \emph{Teissier's trick}.

\bigskip

\item In addition,
\begin{align*}
\mu_\0 \left ( f_{|_{V(z_0)}} \right ) = \left ( \Gamma_{f,z_0}^1 \cdot V(z_0) \right )_\0 + \left ( \Lambda_{f,z_0}^1 \cdot V(z_0) \right )_\0.
\end{align*}

\end{enumerate}

\end{prop}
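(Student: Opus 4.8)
The plan is to treat the two assertions separately: part (1) is the classical ``Teissier trick,'' which I would carry out by parameterizing the branches of the relative polar curve and comparing orders of vanishing; part (2) I would reduce to a colength computation and then invoke additivity of the intersection product over cycles. Throughout, the standing hypothesis $\dim_\0 \Sigma(f_{|_{V(z_0)}}) = 0$ and the equivalent statement that $V\left(\frac{\pd f}{\pd z_1}, \ldots, \frac{\pd f}{\pd z_n}\right)$ is purely $1$-dimensional and properly met by $V(z_0)$ are exactly what make all the relevant intersections proper.

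For part (1), I would first dispose of the three dimension claims. The proper intersection of $\Gamma^1_{f,z_0}$ with $V\left(\frac{\pd f}{\pd z_0}\right)$ is already recorded above, and $\dim_\0 \Gamma^1_{f,z_0} \cap V(z_0) \le 0$ follows from the hypothesis that $V(z_0)$ meets $V\left(\frac{\pd f}{\pd z_1}, \ldots, \frac{\pd f}{\pd z_n}\right)$ properly at $\0$, since $\Gamma^1_{f,z_0}$ is a union of components of that variety; the bound $\dim_\0 \Gamma^1_{f,z_0} \cap V(f) \le 0$ will drop out of the parameterization. Concretely, I would parameterize each branch $C$ of $\Gamma^1_{f,z_0}$ by a map $t \mapsto \phi(t)$ with $\phi(0) = \0$. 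Since all of $\frac{\pd f}{\pd z_1}, \ldots, \frac{\pd f}{\pd z_n}$ vanish identically on $C$, the chain rule collapses to
\[
\frac{d}{dt}\big(f \circ \phi\big)(t) \ = \ \frac{\pd f}{\pd z_0}\big(\phi(t)\big)\cdot \frac{d}{dt}\big(z_0 \circ \phi\big)(t).
\]
Because $\frac{\pd f}{\pd z_0}$ does not vanish identically on $C$ (proper intersection) and $z_0$ does not vanish identically on $C$, neither $f\circ\phi$ nor $z_0\circ\phi$ is constant; in particular $f\not\equiv 0$ on $C$, giving the last dimension bound. Taking orders in $t$ and using $\operatorname{ord}_t(dg/dt) = \operatorname{ord}_t g - 1$ for a non-constant $g$ vanishing at $0$, the two $-1$'s cancel and I obtain, on each branch,
\[
\operatorname{ord}_t(f\circ\phi) \ = \ \operatorname{ord}_t\big(\tfrac{\pd f}{\pd z_0}\circ\phi\big) + \operatorname{ord}_t(z_0\circ\phi).
\]
Summing these orders over the branches, weighted by their multiplicities in the cycle $\Gamma^1_{f,z_0}$ (each intersection number being exactly such a weighted sum of orders of vanishing), yields the stated additivity.

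For part (2), the key point is that $\dim_\0 \Sigma(f_{|_{V(z_0)}}) = 0$ means $f_{|_{V(z_0)}}$ has an isolated singularity, so by Milnor's identity its Milnor number is the colength of its Jacobian ideal, $\mu_\0\big(f_{|_{V(z_0)}}\big) = \dim_\CC \cO_{\CC^n,\0}/\big(\pd(f_{|_{V(z_0)}})/\pd z_1, \ldots, \pd(f_{|_{V(z_0)}})/\pd z_n\big)$. Since differentiation in the directions $z_1, \ldots, z_n$ commutes with restriction to $V(z_0)$, this colength equals $\dim_\CC \cO_{\CC^{n+1},\0}/\big(z_0, \tfrac{\pd f}{\pd z_1}, \ldots, \tfrac{\pd f}{\pd z_n}\big)$, which is the proper (zero-dimensional) intersection multiplicity of the divisor $V(z_0)$ with the cycle $\big[V\big(\tfrac{\pd f}{\pd z_1}, \ldots, \tfrac{\pd f}{\pd z_n}\big)\big]$ at $\0$. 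I would then invoke the cycle identity $V\left(\tfrac{\pd f}{\pd z_1}, \ldots, \tfrac{\pd f}{\pd z_n}\right) = \Gamma^1_{f,z_0} + \Lambda^1_{f,z_0}$ recorded above, together with additivity of $(\,\cdot\, V(z_0))_\0$ over cycles, to split this multiplicity as $\left(\Gamma^1_{f,z_0}\cdot V(z_0)\right)_\0 + \left(\Lambda^1_{f,z_0}\cdot V(z_0)\right)_\0$, which is the claim.

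I expect the real content to sit in part (1): the genuine work is verifying that the restrictions of $f$, $z_0$, and $\frac{\pd f}{\pd z_0}$ to each branch are non-constant, so that the orders are finite and the chain-rule identity can legitimately be differentiated termwise, and in matching ``order of vanishing along a branch'' with ``contribution to the intersection number.'' Once properness and finiteness are secured, the cancellation of the two $-1$'s is immediate. Part (2) is comparatively formal, its only subtlety being the interchange of differentiation and restriction — valid precisely because we differentiate only in the directions $z_1, \ldots, z_n$ tangent to $V(z_0)$.
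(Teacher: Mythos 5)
Your proof is correct and follows essentially the same route as the paper: part (1) is exactly the chain-rule/order-of-vanishing argument the paper attributes to Teissier (and itself reproduces as the identity $(\dagger)$ in the proof of Proposition \ref{prop:equiv}), and part (2) matches the paper's sketch, which reduces $\mu_\0\left(f_{|_{V(z_0)}}\right)$ to the colength of $\left(z_0, \frac{\pd f}{\pd z_1}, \dots, \frac{\pd f}{\pd z_n}\right)$ and then splits the resulting intersection number using $V\left(\frac{\pd f}{\pd z_1}, \dots, \frac{\pd f}{\pd z_n}\right) = \Gamma_{f,z_0}^1 + \Lambda_{f,z_0}^1$ and additivity. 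No gaps; the points you flag (non-vanishing of $z_0$ and $\frac{\pd f}{\pd z_0}$ on each branch, and compatibility of restriction with the tangential derivatives) are precisely the hypotheses supplied by $\dim_\0 \Sigma(f_{|_{V(z_0)}}) = 0$.
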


\medskip

Formula $(1)$ above was first proved by B. Teissier in \cite{teissiercargese} for functions with isolated critical points, and it is an easy exercise to show that the result still holds in the case where $f$ has a critical locus of arbitrary dimension.  Formula $(2)$ follows from the fact that
\begin{align*}
\Sigma \left ( f|_{V(z_0)} \right ) = V \left ( z_0,\frac{\pd f}{\pd z_1}, \cdots,\frac{ \pd f}{\pd z_n} \right )
\end{align*}
\medskip 
and the fact that $V(z_0)$ properly intersects $V \left ( \frac{\pd f}{\pd z_1},\cdots,\frac{\pd f}{\pd z_n} \right )$ at the origin.  

\smallskip

The following numerical invariant, defined and discussed in \cite{betainv}, is crucial to the contents and goal of this paper.

\begin{defn}\label[definition]{beta}
The \newword{beta invariant} of $f$ with respect to $z_0$ is:
\begin{align*}
\beta_f = \beta_{f,z_0} &:= \left ( \Gamma_{f,z_0}^1 \cdot V \left (\frac{\pd f}{\pd z_0} \right ) \right )_\bb{0} -  \sum_C \mu^\circ_{{}_C} \left [ \left ( C \cdot V(z_0) \right )_\bb{0} -1 \right ] \\
&= \lm_{f,z_0}^0 - \lm_{f,z_0}^1 + \sum_C \mu^\circ_{{}_C}\\
&= \wt b_n(F_{f,\0})-\wt b_{n-1}(F_{f,\0}) + \sum_C \mu^\circ_{{}_C}.
\end{align*}
\end{defn}
Using \cref{intnums}, $\beta_f$ may be equivalently expressed as
\begin{align*}
\beta_f &= \left ( \Gamma_{f,z_0}^1 \cdot V(f) \right )_\0 - \mu_\0 \left ( f_{|_{V(z_0)}} \right ) + \sum_C \mu^\circ_{{}_C}.
\end{align*}

\medskip

\begin{rem}
A key property of the beta invariant is that the value $\beta_f$ is independent of the choice of  linear form $z_0$ (provided, of course, that the linear form satisfies $\dim_\0 \Sigma (f_{|_{V(z_0)}}) =0$).  This often allows a great deal of freedom in calculating $\beta_f$ for a given $f$, as different choices of linear forms $L = z_0$ may result in simpler expressions for the intersection numbers $\lm_{f,z_0}^0$ and $\lm_{f,z_0}^1$, while leaving the value of $\beta_f$ unchanged.  \cite[See][Remark 3.2, Example 3.4]{betainv}.
\end{rem}

\medskip

It is shown in  \cite{betainv} that $\beta_f\geq 0$. The interesting question is how strong the requirement that $\beta_f=0$ is.
\medskip

\begin{conj}[Beta Conjecture]\label[conjecture]{beta0}
If $\beta_f=0$,  then $\Sigma f$ has a single irreducible component at $\0$, which is smooth.  
\end{conj}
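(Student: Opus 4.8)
The plan is to reduce the Beta Conjecture to the single assertion that $\beta_f = 0$ forces the relative polar curve to be empty at the origin, and then to invoke the GLL non-splitting theorem. By \cref{nosplit}, once we know that $\Gamma^1_{f,z_0}$ is zero at $\0$, it follows automatically that $\Sigma f$ is a single smooth branch transversely intersected by $V(z_0)$ --- which is exactly the desired conclusion. Furthermore, the condition $\Gamma^1_{f,z_0} = 0$ at $\0$ is equivalent to the vanishing of the polar intersection number $\lm^0_{f,z_0} = (\Gamma^1_{f,z_0} \cdot V(\pd f/\pd z_0))_\0$: since $\0$ is a critical point of $f$ we have $\0 \in V(\pd f/\pd z_0)$, so any nonempty germ of $\Gamma^1_{f,z_0}$ at $\0$ would meet $V(\pd f/\pd z_0)$ there, and (as the intersection is proper) contribute positively. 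Thus the entire problem reduces to proving $\lm^0_{f,z_0} = 0$.

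Using \cref{intnums} and \cref{beta}, I would rewrite the invariant as $\beta_f = \lm^0_{f,z_0} - \sum_C \mu^\circ_{{}_C}[(C \cdot V(z_0))_\0 - 1]$. Choosing $z_0$ generic, which is legitimate because $\beta_f$ is independent of the generic linear form, we have $(C \cdot V(z_0))_\0 = \mult_\0 C$, so the hypothesis $\beta_f = 0$ becomes the sharp equality $\lm^0_{f,z_0} = \sum_C \mu^\circ_{{}_C}(\mult_\0 C - 1)$. Both sides are non-negative --- indeed this is precisely the inequality $\beta_f \geq 0$ established in \cite{betainv} --- so $\beta_f = 0$ is fundamentally a \emph{rigidity} (equality) statement, and the target conclusion is equivalent to forcing \emph{both} sides to vanish simultaneously: the polar curve must be absent, and every branch $C$ of $\Sigma f$ must be smooth (hence, by GLL, a single branch). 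To promote the single scalar identity into separate vanishing, I would try two complementary tactics: first, exploit $z_0$-independence by specializing $V(z_0)$ to be tangent to a branch $C$, which inflates the right-hand side by the extra intersection multiplicity and thereby forces matching growth of $\lm^0_{f,z_0}$, constraining how $\Gamma^1_{f,z_0}$ can sit relative to $\Sigma f$; and second, attempt to refine the proof of $\beta_f \geq 0$ from \cite{betainv} into a sum of local, individually non-negative contributions indexed by the branches of $\Sigma f$ and the components of $\Gamma^1_{f,z_0}$, so that $\beta_f = 0$ annihilates each contribution separately.

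The crux --- and the reason the full statement remains a conjecture --- is exactly this separation of the two non-negative quantities in $\lm^0_{f,z_0} = \sum_C \mu^\circ_{{}_C}(\mult_\0 C - 1)$: a priori, a singular or non-transverse branch of $\Sigma f$ (feeding the right-hand side) could be balanced against a genuinely present relative polar curve (feeding the left-hand side), and ruling out this trade-off in general appears to require an inequality tying the polar multiplicity $\lm^0_{f,z_0}$ to the local geometry of $\Sigma f$ that is strictly stronger than $\beta_f \geq 0$. I do not expect a soft argument to supply such a bound. Where the separation does become tractable is precisely the two special cases of this paper. When $f$ is a sum of analytic functions in disjoint sets of variables, Sebastiani--Thom join and K\"unneth decompositions express $F_{f,\0}$, and hence $\beta_f$, through the corresponding data of the summands, converting the rigidity into an induction (\cref{betainduction}). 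When $\Gamma^1_{f,z_0}$ is cut out by a single equation inside the relative polar surface $\Gamma^2_{f,\bb z}$, the number $\lm^0_{f,z_0}$ can be computed directly by intersection theory on the surface $\Gamma^2_{f,\bb z}$, making the vanishing of $\lm^0_{f,z_0}$ accessible (\cref{onlythm}). These are, I expect, the natural footholds, while the general separation of contributions is the essential obstacle.
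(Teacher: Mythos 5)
Your proposal is sound and matches the paper exactly: the statement is the paper's central \emph{conjecture}, which the paper never proves in general --- it establishes only the equivalence with the polar form (\cref{betaEquiv}, which is precisely your GLL reduction via \cref{nosplit} together with the observation that $\Gamma^1_{f,z_0}=0$ at $\0$ iff $\lm^0_{f,z_0}=0$) plus the two special cases \cref{betainduction} and \cref{onlythm}, which are exactly the two footholds you identify, so your refusal to claim a full proof is the correct call. One small correction: the non-negativity of both sides of your equality $\lm^0_{f,z_0} = \sum_C \mu^\circ_{{}_C}(\mult_\0 C - 1)$ is trivial ($\lm^0_{f,z_0}$ is a proper intersection number, and $\mult_\0 C \geq 1$ in each summand) and is \emph{not} the content of $\beta_f \geq 0$ from \cite{betainv}; that result is the nontrivial statement that the left-hand side dominates the right-hand side, which is exactly what makes $\beta_f = 0$ a rigidity-type equality in the first place.
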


\smallskip 

\begin{conj}[polar form of the Beta Conjecture]\label[conjecture]{beta0polar}
If $\beta_f=0$, then $\0$ is not in the relative polar curve $\Gamma_{f,z_0}^1$ (i.e., the relative polar curve is $0$ as a cycle at the origin). 

\medskip

Equivalently, if the relative polar curve at the origin is not empty, then $\beta_f>0$.
\end{conj}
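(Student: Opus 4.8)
The plan is to argue by contraposition, using the defining formula for $\beta_f$ in \cref{beta},
\[
\beta_f \;=\; \left(\Gamma_{f,z_0}^1\cdot V\!\left(\frac{\partial f}{\partial z_0}\right)\right)_\0 \;-\; \sum_C \mu^\circ_{{}_C}\left[\left(C\cdot V(z_0)\right)_\0 - 1\right],
\]
in which both terms are non-negative integers. The first term is strictly positive if and only if $\0\in\Gamma_{f,z_0}^1$, because $V(\partial f/\partial z_0)$ properly intersects $\Gamma_{f,z_0}^1$ at $\0$; the second term, which I will call the \emph{multiplicity defect}, measures how far $V(z_0)$ is from meeting the branches of $\Sigma f$ with multiplicity one. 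Thus \cref{beta0polar} is equivalent to the inequality: whenever $\0\in\Gamma_{f,z_0}^1$, the polar intersection number strictly exceeds the multiplicity defect. I note at the outset that, once one has shown $\Gamma_{f,z_0}^1=0$ at $\0$, the ``furthermore'' clause of \cref{nosplit} (via its condition (2)) immediately yields the single, smooth component of \cref{beta0}; so the polar form implies the original Beta Conjecture, and it suffices to produce this inequality.

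Next I would exploit the $z_0$-independence of $\beta_f$ to replace $z_0$ by a generic linear form, so that $\left(C\cdot V(z_0)\right)_\0 = \mult_\0 C$ for every component $C$ of $\Sigma f$. The multiplicity defect then becomes $\sum_C \mu^\circ_{{}_C}\left(\mult_\0 C - 1\right)$, which vanishes precisely when every component of $\Sigma f$ is smooth. This settles the case in which all components are smooth: there the defect is $0$, so $\beta_f = \left(\Gamma_{f,z_0}^1\cdot V(\partial f/\partial z_0)\right)_\0$, and $\beta_f = 0$ forces $\0\notin\Gamma_{f,z_0}^1$ outright.

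The remaining case, in which $\Sigma f$ has a singular component and the defect is strictly positive, is where the genuine difficulty lies. Here one must bound the polar intersection number $\left(\Gamma_{f,z_0}^1\cdot V(\partial f/\partial z_0)\right)_\0$ from below by a quantity that beats $\sum_C \mu^\circ_{{}_C}\left(\mult_\0 C - 1\right)$ whenever the polar curve is nonempty at $\0$. I expect this comparison to be the main obstacle: there is no general mechanism forcing the relative polar curve to be ``large enough'' relative to the multiplicities and generic transverse Milnor numbers of the branches of $\Sigma f$, and this is exactly the content that keeps \cref{beta0polar} a conjecture rather than a theorem. The natural way forward is to impose structure that makes the two quantities directly comparable --- for instance, the splitting of $f$ as a sum of two analytic functions on disjoint sets of variables treated in \cref{betainduction}, where the polar and L\^e data decompose and the induction can carry the inequality, or the situation of \cref{onlythm}, where $\Gamma_{f,z_0}^1$ is cut out by a single equation inside the relative polar surface $\Gamma_{f,\bb z}^2$ and intersection theory on that surface lets one read off the needed bound.
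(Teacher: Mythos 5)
You should first be clear about the status of this statement: \cref{beta0polar} is not a theorem of the paper but one of its two equivalent formulations of an open conjecture, and the paper contains no proof of it. So there is no ``paper proof'' to measure your argument against, and your refusal to claim a complete proof is the correct call. What you do prove is sound, and it lines up with what the paper actually establishes around the conjecture. Your observation that once $\Gamma^1_{f,z_0}$ vanishes at $\0$ the ``furthermore'' clause of \cref{nosplit} yields the single smooth component is exactly the second half of the paper's proof of \cref{betaEquiv} (the equivalence of \cref{beta0} and \cref{beta0polar}). Your reduction of the conjecture to the inequality ``polar intersection number strictly exceeds the multiplicity defect'' is a faithful restatement, using $\beta_f\geq 0$ from \cite{betainv}. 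Your closing paragraph correctly identifies the only two situations in which the paper settles the conjecture, namely \cref{betainduction} (which the paper proves via the Sebastiani--Thom isomorphism on Betti numbers, not via a decomposition of polar and L\^e cycles as you suggest) and \cref{onlythm} (with \cref{nonredcor} as a corollary). Your disposal of the case in which every branch of $\Sigma f$ is smooth --- generic $z_0$ kills the defect, so $\beta_f = \lm^0_{f,z_0}$, and \cref{nosplit} finishes --- is correct, and is a genuine, if modest, observation that the paper does not state explicitly.

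One caveat. After you replace $z_0$ by a generic linear form, your conclusion $\0 \notin \Gamma^1_{f,z_0}$ refers to the \emph{new} $z_0$, and this is not cosmetic: the standing hypothesis $\dim_\0 \Sigma (f_{|_{V(z_0)}}) = 0$ alone does not prevent $V(z_0)$ from being tangent to $\Sigma f$. For instance, take $f = (y-x^2)^2 + z^2$ with the admissible choice $z_0 = y$: here $\Sigma f = V(y-x^2,z)$ is a single smooth curve and $\beta_f = 0$ (the Milnor fiber is homotopy equivalent to $S^1$, so $\wt b_2 - \wt b_1 + \mu^\circ = 0 - 1 + 1 = 0$), yet $\Gamma^1_{f,y} = V(x,z)$ passes through $\0$, with $\lm^0_{f,y} = 1$ and $\lm^1_{f,y} = 2$. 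So the conjecture's conclusion can only be asserted for $z_0$ generic enough that $\left(C \cdot V(z_0)\right)_\0 = \mult_\0 C$ for each branch $C$ of $\Sigma f$; the paper makes the same implicit identification in its proof of \cref{betaEquiv}, where the step $\beta_f = \lm^0_{f,z_0}$ likewise requires $\left(C \cdot V(z_0)\right)_\0 = 1$ for the smooth component. Your instinct to pass to a generic form is therefore the right reading of the statement; just say explicitly that this genericity is part of how the conclusion is to be interpreted.
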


\smallskip

\begin{prop}\label[proposition]{betaEquiv}
The \nameref{beta0} is equivalent to the \nameref{beta0polar}.
\end{prop}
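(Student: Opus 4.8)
The plan is to use the GLL non-splitting theorem (\cref{nosplit}) as the bridge between the two conclusions, together with the first displayed formula for $\beta_f$ in \cref{beta}. The only preliminary fact I need is an intersection-theoretic reformulation of ``$\0$ lies on the polar curve.'' Since $V\left(\frac{\pd f}{\pd z_0}\right)$ meets $\Gamma^1_{f,z_0}$ properly at $\0$ (noted just after \cref{cycles}) and $\0$ is a critical point of $f$, so that $\frac{\pd f}{\pd z_0}(\0)=0$, I claim
\[
\Gamma^1_{f,z_0}=0 \text{ at } \0 \quad\Longleftrightarrow\quad \left(\Gamma^1_{f,z_0}\cdot V\left(\frac{\pd f}{\pd z_0}\right)\right)_\0 = 0.
\]
Indeed, if $\0\in\Gamma^1_{f,z_0}$ then $\0$ lies in a proper intersection of the curve $\Gamma^1_{f,z_0}$ with the hypersurface $V\left(\frac{\pd f}{\pd z_0}\right)$, which forces the local intersection number to be strictly positive; the reverse implication is immediate.

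For the direction (polar form) $\Rightarrow$ (Beta Conjecture), I would assume the polar form and suppose $\beta_f=0$. The polar form gives $\Gamma^1_{f,z_0}=0$ at $\0$, which is precisely condition $(2)$ of \cref{nosplit}; the ``furthermore'' clause of that theorem then yields that $\Sigma f$ is a single smooth irreducible component, which is exactly the conclusion of the Beta Conjecture. This half is formal and uses nothing beyond \cref{nosplit}.

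For the reverse direction (Beta Conjecture) $\Rightarrow$ (polar form), I would assume the Beta Conjecture and suppose $\beta_f=0$. Then $\Sigma f$ consists of a single smooth component $C$. Because $\beta_f$ is independent of the choice of linear form, I am free to compute it with a generic $z_0$, chosen so that $V(z_0)$ is transverse to the smooth curve $C$ at $\0$ and hence $\left(C\cdot V(z_0)\right)_\0 = 1$. Substituting this into the first formula of \cref{beta}, the correction sum $\sum_C \mu^\circ_{{}_C}\left[\left(C\cdot V(z_0)\right)_\0 - 1\right]$ collapses to $0$, so that $0=\beta_f=\left(\Gamma^1_{f,z_0}\cdot V\left(\frac{\pd f}{\pd z_0}\right)\right)_\0$. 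By the preliminary claim this gives $\Gamma^1_{f,z_0}=0$ at $\0$, establishing the polar form.

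The main obstacle is this reverse direction, and concretely it is the handling of the correction term $\sum_C \mu^\circ_{{}_C}\left[\left(C\cdot V(z_0)\right)_\0 - 1\right]$: the argument only closes once I know that $V(z_0)$ meets the (single, smooth) component of $\Sigma f$ transversally, i.e.\ $\left(C\cdot V(z_0)\right)_\0 = 1$. This is exactly where the genericity of $z_0$ and the $z_0$-independence of $\beta_f$ are doing real work — for a linear form $z_0$ tangent to $C$ the term is strictly positive, $\Gamma^1_{f,z_0}$ is genuinely nonzero at $\0$, and the polar-form conclusion is literally false for that $z_0$. Thus the polar form must be read for generic $z_0$, and the invariance of $\beta_f$ is what lets me pass to such a transverse slice without changing the hypothesis $\beta_f=0$. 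Once transversality is secured, the equivalence is purely bookkeeping.
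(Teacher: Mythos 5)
Your proof is correct, and its skeleton is the same as the paper's: the implication from the polar form to the Beta Conjecture is exactly the paper's argument, invoking the ``furthermore'' clause of \cref{nosplit}, and the converse rests, as in the paper, on collapsing $\beta_f$ to $\lm^0_{f,z_0}$ and then using the properness of the intersection $\Gamma^1_{f,z_0}\cap V\left(\frac{\pd f}{\pd z_0}\right)$ to conclude the polar curve misses $\0$. The one place you go beyond the paper is worth highlighting: the paper simply writes $\beta_f=\lm^0_{f,z_0}=0$, which tacitly requires $\left(C\cdot V(z_0)\right)_\0=1$, i.e., transversality of $V(z_0)$ to the smooth component $C$ of $\Sigma f$, and this does \emph{not} follow from the standing hypothesis $\dim_\0\Sigma\left(f_{|_{V(z_0)}}\right)=0$ alone. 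Indeed, for $f=(y-x^2)^2+z^2$ with the admissible but tangent choice $z_0=y$, one has $\beta_f=0$ and $\Sigma f=V(y-x^2,z)$ smooth and irreducible, yet $\Gamma^1_{f,y}=V(x,z)$ passes through the origin (here $\lm^0_{f,y}=1\neq 0=\beta_f$), so the polar-form conclusion genuinely fails for that particular linear form. Your appeal to the $z_0$-independence of $\beta_f$ to pass to a transverse linear form, together with your remark that the polar form must accordingly be read for generic $z_0$, supplies precisely the justification that the paper's two-line argument leaves implicit; with that reading, both your proof and the paper's are sound, but yours closes the gap explicitly.
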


\begin{proof} Suppose throughout that $\beta_f=0$.

\smallskip

Suppose first that the Beta Conjecture holds, so that $\Sigma f$ has a single irreducible component at $\0$, which is smooth. Then $\beta_f= \lm_{f,z_0}^0=0$, and so the relative polar curve must be zero at the origin.

\smallskip

Suppose now that the polar form of the Beta Conjecture holds, so that $\Gamma_{f,z_0}^1 = 0$ at $\0$. Then \nameref{nosplit}  implies that $\Sigma f$ has a single irreducible component at $\0$, which is smooth.
\end{proof}

\medskip

\bigskip

\section{Generalized Suspension}

Suppose that $\cU$ and $\cW$ are open neighborhoods of the origin in $\CC^{n+1}$ and $\CC^{m+1}$, respectively, and let $g: (\cU,\bb{0}) \to (\CC,0)$ and $h:(\cW,\bb{0}) \to (\CC,0)$ be two complex analytic functions.  Let $\pi_1 : \cU \times \cW \to \cU$ and $\pi_2 : \cU \times \cW \to \cW$ be the natural projection maps, and set $f=g\boxplus h := g \circ \pi_1 + h \circ \pi_2$.  Then, one trivially has
\begin{align*}
\Sigma f &= \left ( \Sigma g \times \CC^{m+1} \right ) \cap \left ( \CC^{n+1} \times \Sigma h \right ).
\end{align*}
Consequently, if we assume that $g$ has a one-dimensional critical locus at the origin, and that $h$ has an isolated critical point at $\bb{0}$, then $\Sigma f = \Sigma g \times \{\bb{0}\}$ is 1-dimensional and (analytically) isomorphic to $\Sigma g$. 

\smallskip

From this, one immediately has the following result.

\begin{prop0}\label[proposition]{gensusp}
Suppose that $g$ and $h$ are as above, so that $f = g\boxplus h$ has a one-dimensional critical locus at the origin in $\CC^{n+m+2}$. Then, $\beta_f = \mu_\0(h)\beta_g$. 
\end{prop0}

\begin{proof}
This is a consequence of the Sebastiani-Thom isomorphism (see the results of N\'emethi \cite{nemethisebthom1},\cite{nemethisebthom2}, Oka \cite{okasebthom}, Sakamoto \cite{sakamoto}, Sebastiani-Thom \cite{sebthom}, and Massey \cite{masseysebthom}) for the reduced integral cohomology of the Milnor fiber of $f = g\boxplus h$ at $\0$. Letting  $\widehat C$ denote the component of the critical locus $f$ which corresponds to $C$, the Sebastiani-Thom Theorem tells us that 
$$\wt b_{n+m+1} (F_{f,\0})= \mu_\0(h)\wt b_{n} (F_{g,\0}), \hskip 0.2in \wt b_{n+m} (F_{f,\0})=  \mu_\0(h)\wt b_{n-1} (F_{g,\0}), \hskip 0.1in \textnormal{ and }\hskip 0.2in\mu^\circ_{{}_{\widehat C}}= \mu_\0(h) \mu^\circ_{{}_C}.$$

Thus,
\begin{align*}
\beta_f = \lm_{f,z_0}^0 - \lm_{f,z_0}^1 + \sum_{\widehat C} \mu^\circ_{{}_{\widehat C}} = \wt b_{n+m+1} (F_{f,\0}) - \wt b_{n+m} (F_{f,\0}) + \sum_{\widehat C} \mu^\circ_{{}_{\widehat C}} = \mu_\0(h)\beta_g.
\end{align*}
\end{proof}

\begin{cor}\label[corollary]{betainduction}
Suppose $f = g\boxplus h$, where $g$ and $h$ are as in \cref{gensusp}.  Then, if the \nameref{beta0} is true for $g$, it is true for $f$.  
\end{cor}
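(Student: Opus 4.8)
The plan is to transfer the conclusion of the \nameref{beta0} from $g$ to $f = g\boxplus h$ by means of the multiplicative relation $\beta_f = \mu_\0(h)\beta_g$ established in \cref{gensusp}, together with the fact that $\Sigma f$ and $\Sigma g$ are isomorphic as analytic germs.

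First I would assume the hypothesis of the Beta Conjecture for $f$, namely $\beta_f = 0$, with the goal of showing that $\Sigma f$ has a single smooth irreducible component at $\0$. By \cref{gensusp}, $\beta_f = \mu_\0(h)\beta_g$, so $\mu_\0(h)\beta_g = 0$. Since $h$ is assumed to have an isolated critical point at $\0$, its Milnor number is a strictly positive integer, $\mu_\0(h) \geq 1$; dividing out this factor forces $\beta_g = 0$. As $\beta_f$ and $\beta_g$ are nonnegative integers, this cancellation is entirely elementary.

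Next I would invoke the standing hypothesis that the \nameref{beta0} holds for $g$: from $\beta_g = 0$ we conclude that $\Sigma g$ has a single irreducible component at $\0$, and that this component is smooth. Finally, since $\Sigma f = \Sigma g \times \{\0\}$ is analytically isomorphic to $\Sigma g$ via the projection $\pi_1$, this structure is inherited verbatim by $\Sigma f$, which therefore has a single smooth irreducible component at $\0$, completing the argument. I anticipate no genuine obstacle here: the corollary is essentially a formal consequence of \cref{gensusp}, and the only point meriting explicit mention is the positivity $\mu_\0(h) > 0$, which is exactly what the isolated-critical-point hypothesis on $h$ guarantees (were $\0$ not a critical point of $h$, then $\Sigma h$, and hence $\Sigma f = \Sigma g \times \Sigma h$, would be empty near the origin, contradicting the one-dimensionality of $\Sigma f$).
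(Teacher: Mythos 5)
Your proposal is correct and follows essentially the same route as the paper's own proof: assume $\beta_f = 0$, use the relation $\beta_f = \mu_\0(h)\beta_g$ from \cref{gensusp} together with $\mu_\0(h) > 0$ to conclude $\beta_g = 0$, apply the conjecture for $g$, and transfer smoothness via $\Sigma f = \Sigma g \times \{\0\}$. The only difference is expository—your remarks on the positivity of $\mu_\0(h)$ make explicit what the paper leaves implicit.
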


\begin{proof}
Suppose that $\beta_f = 0$.  By \cref{gensusp}, this is equivalent to $ \beta_g = 0$, since $\mu_\0(h) > 0$.  By assumption, $\beta_g =0$ implies that $\Sigma g$ is smooth at zero. Since $\Sigma f = \Sigma g \times \{ \0 \}$, it follows that $\Sigma f$ is also smooth at $\0$, i.e., the \nameref{beta0} is true for $f$.  
\end{proof}

\section{$\Gamma_{f,z_0}^1$ as a hypersurface in $\Gamma_{f,\bb{z}}^2$}\label{hypersurface}

Let $I := \langle \frac{\pd f}{\pd z_2},\cdots, \frac{\pd f}{\pd z_n} \rangle \subseteq \cO_{\cU,\0}$, so that the relative polar surface of $f$ with respect to the coordinates $\bb{z}$ is (as a cycle at $\0$) given by $\Gamma_{f,\bb{z}}^2 = \left [V(I) \right ]$. 

For the remainder of this section, we will drop the brackets around cycles for convenience, and assume that everything is considered as a cycle unless otherwise specified. We remind the reader that we are assuming that $f_{|_{V(z_0)}}$ has an isolated critical point at the origin.

\medskip

\begin{prop}\label{prop:equiv} The following are equivalent:

\begin{enumerate}

\item $\dim_\0\left(\Gamma_{f,\bb{z}}^2\cap V(f)\cap V(z_0)\right)=0$.

\item For all irreducible components $C$ at the origin of the analytic set $\Gamma_{f,\bb{z}}^2\cap V(f)$, $C$ is purely 1-dimensional and properly intersected by $V(z_0)$ at the origin.

\smallskip

\item $\Gamma_{f,\bb{z}}^2$ is properly intersected by $V(z_0, z_1)$ at the origin.

\end{enumerate}

\smallskip

Furthermore, when these equivalent conditions hold 
$$\left ( \Gamma_{f,\bb{z}}^2 \cdot V(f)\cdot  V(z_0) \right )_\0  \ = \ \mu_\0 \left ( f_{|_{V(z_0)}} \right ) + \left ( \Gamma_{f,\bb{z}}^2 \cdot V(z_0,z_1) \right )_\0.$$

\end{prop}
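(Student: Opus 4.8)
The plan is to first extract the common geometric object underlying all three conditions, then settle the equivalences by dimension theory and a single Teissier-style computation along a curve, and finally read the intersection formula off of that same computation.

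The key preliminary observation is that the curve $S := \Gamma^2_{f,\bb z}\cap V(z_0)$ is purely $1$-dimensional at $\0$. Indeed, as a set $S = V(z_0,\pd f/\pd z_2,\dots,\pd f/\pd z_n)$ is cut out by $n$ equations in $\CC^{n+1}$, so every component through $\0$ has dimension $\geq 1$; on the other hand $S\cap V(\pd f/\pd z_1) = \Sigma(f_{|_{V(z_0)}})$ is $0$-dimensional by the standing hypothesis, forcing $\dim_\0 S\leq 1$. Hence $S$ is purely $1$-dimensional and, in particular, no branch (irreducible component) $\gamma$ of $S$ is contained in $V(\pd f/\pd z_1)$. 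I would also record the two facts I use repeatedly: that $V(\pd f/\pd z_1)$ meets $\Gamma^2_{f,\bb z}$ properly (their intersection $V(\pd f/\pd z_1,\dots,\pd f/\pd z_n)$ being purely $1$-dimensional by hypothesis), and the fundamental cycle identity $\Gamma^2_{f,\bb z}\cdot V(\pd f/\pd z_1) = \Gamma^1_{f,z_0}+\Lambda^1_{f,z_0}$.

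The implication $(1)\Leftrightarrow(2)$ is pure dimension counting: since $\Gamma^2_{f,\bb z}$ is purely $2$-dimensional and $V(f)$ is a hypersurface, every component $C$ of $\Gamma^2_{f,\bb z}\cap V(f)$ has dimension $1$ or $2$. If $(1)$ holds, a $2$-dimensional $C$ would force $\dim_\0(C\cap V(z_0))\geq 1$, while a $1$-dimensional $C\subseteq V(z_0)$ would lie inside $\Gamma^2_{f,\bb z}\cap V(f)\cap V(z_0)$; either contradicts $(1)$, so $(2)$ follows. Conversely $(2)$ exhibits the set in $(1)$ as a finite union of $0$-dimensional sets $C\cap V(z_0)$, giving $(1)$.

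The core is the remaining equivalence and the formula, both coming from one computation. For a branch $\gamma$ of $S$, parametrized by its normalization $t\mapsto(0,x_1(t),\dots,x_n(t))$, the chain rule together with $z_0\equiv 0$ and $\pd f/\pd z_i\equiv 0$ on $\gamma$ for $i\geq 2$ collapses to
\[
\frac{d}{dt}\,f(\gamma(t)) \;=\; \frac{\pd f}{\pd z_1}(\gamma(t))\cdot x_1'(t).
\]
As $\pd f/\pd z_1$ has finite order along $\gamma$, comparing orders (with the convention that an identically-zero restriction has infinite order) yields $\operatorname{ord}_t(f\circ\gamma) = \operatorname{ord}_t(\pd f/\pd z_1\circ\gamma) + \operatorname{ord}_t(z_1\circ\gamma)$, whence $\gamma\subseteq V(f)\Leftrightarrow\gamma\subseteq V(z_1)$ for every branch. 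Since $S$ is purely $1$-dimensional, $(1)$ reads $\dim_\0(S\cap V(f))=0$, i.e. no branch lies in $V(f)$, and $(3)$ reads that no branch lies in $V(z_1)$; the branchwise equivalence gives $(1)\Leftrightarrow(3)$. Assuming the conditions, summing the order identity over the branches of the $1$-cycle $\Gamma^2_{f,\bb z}\cdot V(z_0)$ with their multiplicities gives the left side $(\Gamma^2_{f,\bb z}\cdot V(f)\cdot V(z_0))_\0$, the $z_1$-term $(\Gamma^2_{f,\bb z}\cdot V(z_0,z_1))_\0$, and the $\pd f/\pd z_1$-term $(\Gamma^2_{f,\bb z}\cdot V(\pd f/\pd z_1)\cdot V(z_0))_\0 = ((\Gamma^1_{f,z_0}+\Lambda^1_{f,z_0})\cdot V(z_0))_\0$, which equals $\mu_\0(f_{|_{V(z_0)}})$ by \cref{intnums}(2). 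The main obstacle is making this summation rigorous: I must verify that the intersection multiplicity of the $1$-cycle $\Gamma^2_{f,\bb z}\cdot V(z_0)$ against a hypersurface $V(g)$ is genuinely $\sum_\gamma n_\gamma\operatorname{ord}_t(g\circ\gamma)$, and I must justify both the cycle identity $\Gamma^2_{f,\bb z}\cdot V(\pd f/\pd z_1)=\Gamma^1_{f,z_0}+\Lambda^1_{f,z_0}$ and the commutativity of these proper intersection products, so that slicing by $V(z_0)$ first or last is legitimate; these are precisely the points where I would invoke the intersection formalism of \cite{lecycles}.
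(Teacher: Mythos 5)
Your proposal is correct and follows essentially the same route as the paper's proof: both apply Teissier's trick to $f_{|_{V(z_0)}}$, showing $\Gamma^2_{f,\bb z}\cap V(z_0)$ is purely $1$-dimensional, parametrizing its branches, and using the chain-rule identity $\big(f(\gamma(t))\big)' = \frac{\pd f}{\pd z_1}(\gamma(t))\cdot z_1'(t)$ to compare $t$-orders, which yields both the equivalence $(1)\Leftrightarrow(3)$ and, by summing multiplicities over branches, the intersection formula. The only cosmetic difference is that you route the term $\left(\Gamma^2_{f,\bb z}\cdot V\left(\frac{\pd f}{\pd z_1}\right)\cdot V(z_0)\right)_\0$ through the cycle decomposition $\Gamma^1_{f,z_0}+\Lambda^1_{f,z_0}$ and \cref{intnums}(2), whereas the paper identifies it with $\mu_\0\left(f_{|_{V(z_0)}}\right)$ directly; the bookkeeping facts you flag as needing justification are indeed the standard proper-intersection formalism of \cite{lecycles} that the paper also invokes implicitly.
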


\begin{proof} Clearly (1) and (2) are equivalent. We wish to show that (1) and (3) are equivalent. This follows from Tessier's trick applied to $f_{|_{V(z_0)}}$, but -- as it is crucial -- we shall quickly run through the argument.

Since $f_{|_{V(z_0)}}$ has an isolated critical point at the origin,
$$
\dim_\0 \left(\Gamma_{f,\bb{z}}^2\cap V\left(\frac{\partial f}{\partial z_1}\right)\cap V(z_0)\right) =0.
$$
Hence, $Z:= \Gamma_{f,\bb{z}}^2\cap  V(z_0)$ is purely 1-dimensional at the origin.

Let $Y$ be an irreducible component of $Z$ through the origin, and let $\alpha(t)$ be a parametrization of $Y$ such that $\alpha(0)=\0$. Let $z_1(t)$ denote the $z_1$ component of $\alpha(t)$. Then,
$$
\big(f(\alpha(t))\big)' \ = \ {\frac{\partial f}{\partial z_1}}_{{\big |}_{\alpha(t)}}\cdot z_1'(t).\eqno{(\dagger)}
$$
Since $\dim_\0\displaystyle Y\cap V\left(\frac{\partial f}{\partial z_1}\right)=0$, we conclude that $\big(f(\alpha(t))\big)' \equiv 0$ if and only if $z_1'(t)\equiv 0$, which tells us that $f(\alpha(t)) \equiv 0$ if and only if $z_1(t)\equiv 0$. Thus, $\dim_\0Y\cap V(f)=0$ if and only if $\dim_\0 Y\cap V(z_1)=0$, i.e., (1) and (3) are equivalent. The equality now follows at once by considering the $t$-multiplicity of both sides of $(\dagger)$.
\end{proof}

\medskip

\vbox{\begin{thm}\label[theorem]{onlythm}
Suppose that 
\begin{enumerate}

\item for all irreducible components $C$ at the origin of the analytic set \, $\Gamma_{f,\bb{z}}^2\cap V(f)$, $C$ is purely 1-dimensional, properly intersected by $V(z_0)$ at the origin, and $\left(C\cdot V(z_0)\right)_\0 = \operatorname{mult}_\0 C$, and

\smallskip

\item the cycle $\Gamma_{f,z_0}^1$ equals $\Gamma_{f,\bb{z}}^2\cdot V(h)$ for some $h\in  \cO_{\cU,\0}$ (in particular, the relative polar curve at the origin is non-empty).

\end{enumerate}

Then, 
$$\wt b_n(F_{f,\0}) - \wt b_{n-1}(F_{f,\0})\geq  \left ( \Gamma_{f,\bb{z}}^2 \cdot V(z_0,z_1) \right )_\0$$
and so
$$\beta_f \geq \left ( \Gamma_{f,\bb{z}}^2 \cdot V(z_0,z_1) \right )_\0 +  \sum_{ C} \mu^\circ_{{}_{ C}}.$$   In particular, the \nameref{beta0} is true for $f$.  
\end{thm}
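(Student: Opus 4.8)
The plan is to run everything through the polar expression of the beta invariant, $\beta_f = \left(\Gamma_{f,z_0}^1\cdot V(f)\right)_\0 - \mu_\0\!\left(f_{|_{V(z_0)}}\right) + \sum_C\mu^\circ_{{}_C}$, and to bound the term $\left(\Gamma_{f,z_0}^1\cdot V(f)\right)_\0$ from below. First I would use hypothesis $(2)$ to write $\Gamma_{f,z_0}^1 = \Gamma_{f,\bb z}^2\cdot V(h)$, so that, intersecting further with $V(f)$ and reordering the proper intersection product,
$$\left(\Gamma_{f,z_0}^1\cdot V(f)\right)_\0 \ = \ \left(\Gamma_{f,\bb z}^2\cdot V(h)\cdot V(f)\right)_\0 \ = \ \left(\Gamma_{f,\bb z}^2\cdot V(f)\cdot V(h)\right)_\0 \ = \ \left(\Delta\cdot V(h)\right)_\0, \qquad \Delta := \Gamma_{f,\bb z}^2\cdot V(f).$$
Here $\Delta$ is the $1$-dimensional cycle whose components are exactly the curves $C$ of hypothesis $(1)$, and the reordering is legitimate because each intermediate intersection is proper: $\Gamma_{f,\bb z}^2\cdot V(h)=\Gamma_{f,z_0}^1$ and $\Gamma_{f,\bb z}^2\cdot V(f)=\Delta$ are $1$-dimensional, while $\Gamma_{f,\bb z}^2\cap V(f)\cap V(h)=\Gamma_{f,z_0}^1\cap V(f)$ is $0$-dimensional by \cref{intnums}.

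The crux is then a per-component comparison. Writing $\Delta = \sum_i m_i[C_i]$ with each $m_i\geq 1$, I would show $\left(C_i\cdot V(h)\right)_\0\geq\operatorname{mult}_\0 C_i$ for every $i$. This holds for any $h$ vanishing at the origin — and $h(\0)=0$ since $\0\in\Gamma_{f,z_0}^1\subseteq V(h)$ — by a parametrization argument: if $\alpha(t)$ parametrizes $C_i$ with $\alpha(0)=\0$, then $\left(C_i\cdot V(h)\right)_\0 = \operatorname{ord}_t h(\alpha(t))\geq\operatorname{ord}_t\alpha(t) = \operatorname{mult}_\0 C_i$, because $h$ has no constant term. (No $C_i$ lies in $V(h)$, as $\Delta\cap V(h)$ is $0$-dimensional, so each intersection number is finite.) Combining this with the multiplicity equality $\left(C_i\cdot V(z_0)\right)_\0 = \operatorname{mult}_\0 C_i$ supplied by hypothesis $(1)$,
$$\left(\Gamma_{f,z_0}^1\cdot V(f)\right)_\0 = \sum_i m_i\left(C_i\cdot V(h)\right)_\0 \geq \sum_i m_i\operatorname{mult}_\0 C_i = \sum_i m_i\left(C_i\cdot V(z_0)\right)_\0 = \left(\Gamma_{f,\bb z}^2\cdot V(f)\cdot V(z_0)\right)_\0.$$

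Finally I would feed this into Proposition~\ref{prop:equiv}, whose equivalent conditions are exactly guaranteed by hypothesis $(1)$; its concluding identity gives $\left(\Gamma_{f,\bb z}^2\cdot V(f)\cdot V(z_0)\right)_\0 = \mu_\0\!\left(f_{|_{V(z_0)}}\right) + \left(\Gamma_{f,\bb z}^2\cdot V(z_0,z_1)\right)_\0$. Substituting the displayed inequality into the polar expression for $\beta_f$ cancels the $\mu_\0\!\left(f_{|_{V(z_0)}}\right)$ terms and yields $\beta_f\geq\left(\Gamma_{f,\bb z}^2\cdot V(z_0,z_1)\right)_\0 + \sum_C\mu^\circ_{{}_C}$; the Betti inequality is then immediate from $\wt b_n(F_{f,\0})-\wt b_{n-1}(F_{f,\0}) = \beta_f - \sum_C\mu^\circ_{{}_C}$. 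For the last assertion, note that $\0\in\Gamma_{f,\bb z}^2$ (since $\0\in\Gamma_{f,z_0}^1\subseteq\Gamma_{f,\bb z}^2$) and that $V(z_0,z_1)$ meets $\Gamma_{f,\bb z}^2$ properly by Proposition~\ref{prop:equiv}$(3)$, so $\left(\Gamma_{f,\bb z}^2\cdot V(z_0,z_1)\right)_\0\geq 1$ and hence $\beta_f>0$. As the premise $\beta_f=0$ of the \nameref{beta0} therefore cannot hold for such $f$ — equivalently, we have directly verified the \nameref{beta0polar}, which is equivalent to the \nameref{beta0} by \cref{betaEquiv} — the conjecture is true for $f$.

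I expect the main obstacle to be the intersection-theoretic bookkeeping that justifies the first paragraph: confirming that $\Gamma_{f,\bb z}^2\cdot V(h)$, $\Gamma_{f,\bb z}^2\cdot V(f)$, and the triple intersection at $\0$ are all proper, so that the iterated intersection numbers are genuinely associative and commutative (as in \cite{lecycles}), and in particular that no component $C_i$ of $\Delta$ is contained in $V(h)$. Once the reduction to the $1$-dimensional cycle $\Delta$ is in place, the remaining inequality is the robust estimate $\operatorname{ord}_t h(\alpha(t))\geq\operatorname{mult}_\0 C_i$, so the essential content is really the interplay between hypothesis $(2)$, which exhibits $\Gamma_{f,z_0}^1$ as a hypersurface section of $\Gamma_{f,\bb z}^2$, and the rigidity forced by the multiplicity equality in hypothesis $(1)$.
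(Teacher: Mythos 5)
Your proposal is correct and follows essentially the same route as the paper's proof: both substitute $\Gamma_{f,z_0}^1 = \Gamma_{f,\bb z}^2\cdot V(h)$ into the expression $\left(\Gamma_{f,z_0}^1\cdot V(f)\right)_\0 - \mu_\0\left(f_{|_{V(z_0)}}\right)$, invoke the identity of Proposition~\ref{prop:equiv}, and reduce everything to the per-component comparison $\left(C\cdot V(h)\right)_\0 \geq \operatorname{mult}_\0 C = \left(C\cdot V(z_0)\right)_\0$ on the cycle $\Gamma_{f,\bb z}^2\cdot V(f)$. Your write-up merely reorganizes the bookkeeping (working with $\beta_f$ directly rather than with $\lm^0_{f,z_0}-\lm^1_{f,z_0}$ first) and makes explicit two points the paper leaves implicit: the properness checks justifying commutativity of the iterated intersections, and the parametrization argument behind the multiplicity bound.
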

}

\begin{proof}
By Proposition \ref{prop:equiv}, 
$$\left ( \Gamma_{f,\bb{z}}^2 \cdot V(f)\cdot  V(z_0) \right )_\0  \ = \ \mu_\0 \left ( f_{|_{V(z_0)}} \right ) + \left ( \Gamma_{f,\bb{z}}^2 \cdot V(z_0,z_1) \right )_\0.$$

By assumption, $\Gamma_{f,z_0}^1 = \Gamma_{f,\bb{z}}^2\cdot V(h)$, for some $h\in  \cO_{\cU,\0}$.  Then, via \cref{intnums} and the above paragraph, we have
\begin{align*}
\wt b_n(F_{f,\0}) - \wt b_{n-1}(F_{f,\0})&=\lm_{f,z_0}^0 - \lm_{f,z_0}^1 \\
&= \left ( \Gamma_{f,z_0}^1 \cdot V(f) \right )_\0 - \mu_\0 \left ( f_{|_{V(z_0)}} \right ) \\
&=\left[\left ( \Gamma_{f,\bb{z}}^2 \cdot V(h)\cdot  V(f) \right )_\0 - \left ( \Gamma_{f,\bb{z}}^2 \cdot V(f)\cdot  V(z_0) \right )_\0\right] + \left ( \Gamma_{f,\bb{z}}^2 \cdot V(z_0,z_1) \right )_\0.
\end{align*}

As $\left(C\cdot V(z_0)\right)_\0 = \operatorname{mult}_\0 C$ for all irreducible components $C$ of  $\Gamma_{f,\bb{z}}^2 \cap V(f)$, the bracketed quantity above is non-negative.  The conclusion follows.\end{proof}

\medskip

\begin{exm}\label[example]{notsmooth}
To illustrate the content of \cref{onlythm}, consider the following example. Let $f = (x^3 + y^2 + z^5)z$ on $\CC^3$, with coordinate ordering $(x,y,z)$.  Then, we have $\Sigma f = V(x^3 + y^2,z)$, and
\begin{align*}
\Gamma_{f,(x,y)}^2 &= V \left ( \frac{\pd f}{\pd z} \right ) = V(x^3 + y^2 + 6z^5),
\end{align*}
which we note has an isolated singularity at $\0$.  

\smallskip

Then, 
\begin{align*}
V \left ( \frac{\pd f}{\pd y}, \frac{\pd f}{\pd z} \right ) &= V(2yz,x^3 + y^2 + 6z^5 ) \\
&= V(y,x^3 + 6z^5) + V(z,x^3 + y^2)  
\end{align*}
so that $\Gamma_{f,x}^1 = V(y,x^3 + 6z^5)$, and $\Lambda_{f,x}^1$ consists of the single component $C = V(z,x^3+y^2)$ with $\stackrel{\circ}{\mu}_C = 1$. It is then immediate that 
\begin{align*}
\Gamma_{f,x}^1 = V(y) \cdot \Gamma_{f,(x,y)}^2,
\end{align*}
so that the second hypothesis of \cref{onlythm} is satisfied.  For the first hypothesis, we note that 
\begin{align*}
\Gamma_{f,(x,y)}^2 \cap V(f) &= V( x^3 + y^2 + 6z^5, (x^3 + y^2 + z^5)z ) \\ 
&= V(5z^5,x^3+y^2+z^5) \cup V(x^3+y^2,z) \\
&= V(x^3+y^2,z ) = C.
\end{align*}
Clearly, $C$ is purely 1-dimensional, and is properly intersected by $V(x)$ at $\0$. Finally, we see that
\begin{align*}
(C \cdot V(x) )_\0 &= V(x,z,x^3 +y^2)_\0 = 2 = \mult_\0 C,
\end{align*}
so the two hypotheses of \cref{onlythm} are satisfied.  

\smallskip

By \cref{BettiLe}, \cref{onlythm} guarantees that the following inequality holds:
\begin{align*}
\lm_{f,x}^0 - \lm_{f,x}^1 &\geq \left ( \Gamma_{f,(x,y)}^2 \cdot V(x,y) \right )_\0.
\end{align*}
Let us verify this inequality ourselves.  We have
\begin{align*}
\lm_{f,x}^0 &= \left ( \Gamma_{f,x}^1 \cdot V \left ( \frac{\pd f}{\pd x} \right ) \right )_\0 = V(y,x^3+6z^5, 3x^2z)_\0 \\
&= V(y,x^2,z^5)_\0 + V(y,z,x^3)_\0 = 13,
\end{align*}
and 
\begin{align*}
\lm_{f,x}^1 &= \left ( \Lambda_{f,x}^1 \cdot V(x) \right )_\0 = V(x,z,x^3+y^2)_\0  = 2.
\end{align*}
Finally, we compute
\begin{align*}
\left ( \Gamma_{f,(x,y)}^2 \cdot V(x,y) \right )_\0 &= V(x,y,x^3 +y^2 + 6z^5)_\0 = 5.
\end{align*}
Putting this all together, we have 
\begin{align*}
\lm_{f,x}^0 - \lm_{f,x}^1 = 11 \geq 5 = \left ( \Gamma_{f,(x,y)}^2 \cdot V(x,y) \right )_\0,
\end{align*}
as expected.
\end{exm}

\begin{exm}\label[example]{nothypersurface}
We now give an example where the relative polar curve is {\bf not} defined inside $\Gamma_{f,\bb{z}}^2$ by a single equation, and $\wt b_n(F_{f,\0}) - \wt b_{n-1}(F_{f,\0}) < 0$.  

\smallskip

Let $f = (z^2 - x^2-y^2)(z-x)$, with coordinate ordering $(x,y,z)$.  Then, we have $\Sigma f = V(y,z-x)$, and 
\begin{align*}
\Gamma_{f,\bb{z}}^2 &= V \left ( \frac{\pd f}{\pd z} \right ) = V(2z(z-x) + (z^2-x^2-y^2) ).
\end{align*}
Similarly,
\begin{align*}
V \left ( \frac{\pd f}{\pd y}, \frac{\pd f}{\pd z} \right ) = V(y,3z+x) + 3 V(y,z-x),
\end{align*}
so that $\Gamma_{f,x}^1 = V(y,3z+x)$ and $\mu^\circ = 3$.  It then follows that $\Gamma_{f,x}^1$ is not defined by a single equation inside $\Gamma_{f,(x,y)}^2$ 

\smallskip

To see that $\wt b_2(F_{f,\0}) - \wt b_1(F_{f,\0}) < 0$, we note that, up to analytic isomorphism, $f$ is the homogeneous polynomial $f = (zx-y^2)z$.  Consequently, we need only consider the global Milnor fiber of $f$, i.e., $F_{f,\0}$ is diffeomorphic to $f^{-1}(1)$.  Thus, $F_{f,\0}$ is homotopy equivalent to $S^1$, so that $\wt b_2(F_{f,\0}) = 0$ and $\wt b_1(F_{f,\0}) = 1$.  

\end{exm}

\medskip

\begin{cor}\label[corollary]{nonredcor}
The \nameref{beta0} is true if the set $\Gamma_{f,\bb{z}}^2$ is smooth and transversely intersected by $V(z_0, z_1)$ at the origin.  In particular, the \nameref{beta0} is true for non-reduced plane curve singularities.
\end{cor}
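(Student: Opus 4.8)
The plan is to deduce the corollary from \cref{onlythm} by verifying that its two hypotheses hold under the standing assumptions, and then to convert the resulting positivity of $\beta_f$ into the Beta Conjecture via \cref{betaEquiv}. Throughout I write $S:=\Gamma_{f,\bb{z}}^2$ for the (assumed smooth) relative polar surface, and I will only need to produce the conclusion when the relative polar curve is nonempty, since \cref{betaEquiv} reduces everything to the polar form.

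First I would dispose of hypothesis $(2)$ of \cref{onlythm}. Since $\Gamma_{f,z_0}^1\subseteq V\left(\frac{\pd f}{\pd z_1},\dots,\frac{\pd f}{\pd z_n}\right)\subseteq V\left(\frac{\pd f}{\pd z_2},\dots,\frac{\pd f}{\pd z_n}\right)=S$, the relative polar curve is a $1$-dimensional cycle supported on $S$. Because $S$ is smooth of dimension $2$, the local ring $\cO_{S,\0}$ is a regular local ring of dimension $2$, hence (by Auslander--Buchsbaum) a UFD; consequently every height-one prime, and therefore every effective $1$-dimensional cycle on $S$, is principal. Writing $\Gamma_{f,z_0}^1=\Div_S(\bar h)$ for some $\bar h\in\cO_{S,\0}$ and lifting $\bar h$ to $h\in\cO_{\cU,\0}$ yields $\Gamma_{f,z_0}^1=S\cdot V(h)=\Gamma_{f,\bb{z}}^2\cdot V(h)$, which is exactly hypothesis $(2)$.

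For hypothesis $(1)$, the transversality assumption is precisely that $V(z_0,z_1)$ properly (indeed transversally) meets $S$, i.e. condition $(3)$ of Proposition~\ref{prop:equiv}; hence condition $(2)$ there holds, so every component $C$ of $S\cap V(f)$ is purely $1$-dimensional and properly intersected by $V(z_0)$. The remaining requirement $\left(C\cdot V(z_0)\right)_\0=\mult_\0 C$ is the delicate point, and it is where I expect the main obstacle. Transversality of $V(z_0,z_1)$ to $S$ makes $z_0|_S,z_1|_S$ a local coordinate system on $S$, so each $C\subseteq S$ is a plane-curve germ and $\left(C\cdot V(z_0)\right)_\0=\mult_\0 C$ holds exactly when $\{z_0|_S=0\}$ is not tangent to $C$ — a condition that transversality to $S$ alone does not force. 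To secure it I would use that $\beta_f$ is independent of $z_0$: replacing $z_0$ by $z_0+c\,z_1$ for generic $c\in\CC$ leaves $S$ unchanged (it alters only $\frac{\pd f}{\pd z_1}$, not $\frac{\pd f}{\pd z_2},\dots,\frac{\pd f}{\pd z_n}$), leaves the plane $V(z_0,z_1)$ and its transversality to $S$ unchanged, leaves $\beta_f$ unchanged, and keeps $\dim_\0\Sigma\!\left(f_{|V(z_0+cz_1)}\right)=0$; meanwhile the line $\{z_0+cz_1=0\}\cap T_\0 S$ sweeps out every tangent direction in $T_\0 S$ except the $z_0$-axis. As the finitely many components $C$ of $S\cap V(f)$ have only finitely many tangent lines, a generic $c$ avoids all of them and gives $\left(C\cdot V(z_0)\right)_\0=\mult_\0 C$ for every $C$.

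With both hypotheses in hand, \cref{onlythm} applies whenever $\Gamma_{f,z_0}^1\neq 0$ and gives $\beta_f\geq\left(\Gamma_{f,\bb{z}}^2\cdot V(z_0,z_1)\right)_\0+\sum_C\mu^\circ_{{}_C}$; transversality of the smooth surface $S$ to the complementary-dimensional plane $V(z_0,z_1)$ forces $\left(\Gamma_{f,\bb{z}}^2\cdot V(z_0,z_1)\right)_\0\geq 1$, so $\beta_f>0$. Thus a nonempty relative polar curve forces $\beta_f>0$, i.e. $\beta_f=0$ implies $\Gamma_{f,z_0}^1=0$, which is the polar form of the Beta Conjecture; \cref{betaEquiv} then yields the Beta Conjecture itself. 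For the final clause, a non-reduced plane curve singularity is the case $n=1$, where the index set $z_2,\dots,z_n$ is empty, so $\Gamma_{f,\bb{z}}^2=[\,\cU\,]=[\CC^2]$ is smooth and is transversely intersected by $V(z_0,z_1)=\{\0\}$; the hypotheses hold automatically and the result follows.
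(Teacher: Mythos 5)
Your overall route is the same as the paper's: verify hypotheses (1) and (2) of \cref{onlythm} --- using the UFD property of a regular local ring for (2), and a generic substitution $z_0 \mapsto z_0 + c z_1$ (which changes neither $\Gamma_{f,\bb{z}}^2$ nor the plane $V(z_0,z_1)$) for (1) --- and then conclude via \cref{betaEquiv}. Your treatment of hypothesis (1), of the case $\Gamma_{f,z_0}^1 = 0$, and of the plane-curve clause is correct, and is in fact spelled out in more detail than in the paper.

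There is, however, a genuine gap in your verification of hypothesis (2): you identify the \emph{cycle} $\Gamma_{f,\bb{z}}^2$ with its reduced support $S$. The corollary assumes only that the \emph{set} $\Gamma_{f,\bb{z}}^2$ is smooth; as a cycle one may have $\Gamma_{f,\bb{z}}^2 = m[S]$ with $m>1$, and this is precisely the point the paper's proof takes care of (it opens with ``suppose the cycle $\Gamma_{f,\bb{z}}^2 = m[V(\mathfrak p)]$''). Your argument produces $h$ with $\Gamma_{f,z_0}^1 = [S]\cdot V(h)$, but then $\Gamma_{f,\bb{z}}^2\cdot V(h) = m\left([S]\cdot V(h)\right) = m\,\Gamma_{f,z_0}^1$, which differs from $\Gamma_{f,z_0}^1$ when $m>1$; so the last equality in your chain ``$\Gamma_{f,z_0}^1 = S\cdot V(h) = \Gamma_{f,\bb{z}}^2\cdot V(h)$'' fails. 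Concretely, take $f = z^3 + (y^2-x^3)^2$ on $\CC^3$ with ordering $(x,y,z)$ (note $f_{|_{V(x)}} = z^3+y^4$ has an isolated critical point, so $x$ is an allowable form). Then $\Sigma f = V(z,y^2-x^3)$, $\Gamma_{f,(x,y)}^2 = [V(3z^2)] = 2[V(z)]$ has smooth support, and $\Gamma_{f,x}^1 = 2[V(y,z)]$; your recipe yields $\bar h = y^2$, for which $\Gamma_{f,(x,y)}^2\cdot V(y^2) = 4[V(y,z)] \neq \Gamma_{f,x}^1$, whereas the correct choice is $h = y$. The repair is exactly the paper's argument: since $\Gamma_{f,z_0}^1 + \Lambda_{f,z_0}^1 = \Gamma_{f,\bb{z}}^2\cdot V\!\left(\frac{\pd f}{\pd z_1}\right)$ (the polar surface is a complete intersection, hence Cohen--Macaulay), factor the image of $\frac{\pd f}{\pd z_1}$ in the regular --- hence UFD --- ring $\cO_{S,\0}$, and let $h$ be a lift of the product of those prime-power factors whose zero loci are not contained in $\Sigma f$; this $h$ carries the multiplicity $m$ correctly, giving $\Gamma_{f,\bb{z}}^2\cdot V(h) = \Gamma_{f,z_0}^1$ on the nose. (Equivalently: the displayed identity shows every coefficient of $\Gamma_{f,z_0}^1$ is divisible by $m$, so you may write $\Gamma_{f,z_0}^1 = m\Div_S(\bar h)$ and then proceed as you did.)
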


\begin{proof} Suppose that the cycle $\Gamma_{f,\bb{z}}^2=m[V(\mathfrak p)]$, where $\mathfrak p$ is prime. Since the set $\Gamma_{f,\bb{z}}^2$ is smooth, $A:=\cO_{\cU,\0}/\mathfrak p$ is regular and so, in particular, is a UFD. The image of $\partial f/\partial z_1$ in $A$ factors (uniquely), yielding an $h$ as in hypothesis (2) of Theorem \ref{onlythm}.

Furthermore, the transversality of $V(z_0, z_1)$ to $\Gamma_{f,\bb{z}}^2$ at the origin assures us that, by replacing $z_0$ by a generic linear combination $az_0+bz_1$, we obtain hypothesis (1)  of Theorem \ref{onlythm}.
\end{proof}

\medskip

\begin{exm}\label[example]{SmoothEx}
Consider the case where $f = z^2 + (y^2-x^3)^2$ on $\CC^3$, with coordinate ordering $(x,y,z)$; a quick calculation shows that $\Sigma f = V(z,y^2-x^3)$.  Then, 
\begin{align*}
\Gamma_{f,(x,y)}^2 = V \left ( \frac{\pd f}{\pd z} \right ) = V(z)
\end{align*}
is clearly smooth at the origin and transversely intersected at $\0$ by the line $V(x,y)$, so the hypotheses of \cref{nonredcor} are satisfied.  Again, we want to verify by hand that the inequality
\begin{align*}
\lm_{f,x}^0 - \lm_{f,x}^1 \geq \left ( \Gamma_{f,(x,y)}^2 \cdot V(x,y) \right )_\0
\end{align*}
holds.  

\smallskip

First, we have
\begin{align*}
\lm_{f,x}^0 &= \left ( \Gamma_{f,x}^1 \cdot V \left ( \frac{\pd f}{\pd x} \right ) \right )_\0 = V(y,z,2(y^2-x^3)(-3x^2) )_\0 = V(y,z,x^5)_\0 = 5,
\end{align*}
and 
\begin{align*}
\lm_{f,x}^1 &= \left ( \Lambda_{f,x}^1 \cdot V(x) \right )_\0 
= V(x,z,y^2-x^3)_\0 = V(x,z,y^2)_\0 = 2.
\end{align*}
On the other hand, we have $\left ( \Gamma_{f,(x,y)}^2 \cdot V(x,y) \right )_\0 = V(x,y,z)_\0 = 1$, and we see again that the desired inequality holds.  
\end{exm}

\medskip

In the case where $f$ defines non-reduced plane curve singularity, there is a nice explicit formula for $\beta_f$, which we will derive in \cref{conclude}. 

\medskip

\section{Non-reduced Plane Curves}\label{conclude}

By \cref{nonredcor}, the \nameref{beta0} is true for non-reduced plane curve singularities.  However, in that special case, we may calculate $\beta_f$ explicitly.  

\smallskip

Let $\cU$ be an open neighborhood of the origin in $\CC^2$, with coordinates $(x,y)$. 

\begin{prop}\label[proposition]{nonred}
Suppose that $f$ is of the form $f = g(x,y)^p h(x,y)$, where $g : (\cU,\0) \to (\CC,\0)$ is irreducible, $g$ does not divide $h$, and $p > 1$.  Then,
 \begin{align*}
\beta_f &= \left \{ \begin{matrix} (p+1)V(g,h)_\0 + p\mu_\0(g) + \mu_\0(h) -1, && \text{ if $h(\0) = 0$; and} \\ p\mu_\0(g), && \text{ if $h(\0) \neq 0$.} \end{matrix} \right .
\end{align*}
Thus, $\beta_f = 0$ implies that $\Sigma f$ is smooth at $\0$.  

\end{prop}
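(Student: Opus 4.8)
The plan is to compute $\beta_f$ directly from the formula
$\beta_f = \left(\Gamma^1_{f,z_0}\cdot V(f)\right)_\0 - \mu_\0\!\left(f_{|V(z_0)}\right) + \sum_C \mu^\circ_{{}_C}$
recorded just after \cref{beta}. Since $\beta_f$ does not depend on the generic choice of $z_0$, I would first make a generic linear change of coordinates so that $z_0 = x$, $z_1 = y$, the line $V(x)$ is transverse to $V(g)$ and $V(h)$ at $\0$, and (by genericity) $g\nmid g_y$. Writing $\partial f/\partial x = g^{p-1}\left(p h g_x + g h_x\right)$ and $\partial f/\partial y = g^{p-1}\phi$ with $\phi := p h g_y + g h_y$, the hypothesis $p>1$ shows $V(g)$ is the unique $1$-dimensional component of $\Sigma f$; at a generic point of $V(g)$ the function $f$ equals a transverse coordinate to the $p$-th power times a unit, so $\mu^\circ_{{}_{V(g)}} = p-1$ and $\sum_C \mu^\circ_{{}_C} = p-1$. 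Since $\phi \equiv p h g_y \pmod g$ and $g$ is prime with $g\nmid h$ and $g\nmid g_y$, we get $g\nmid\phi$; hence no component of $V(\phi)$ lies in $\Sigma f$, so $\Gamma^1_{f,x} = V(\phi)$.

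Next I would evaluate the two remaining terms. The restriction $f(0,y) = g(0,y)^p\, h(0,y)$ vanishes to order $p\,\mult_\0 g + \mult_\0 h$ (to order $p\,\mult_\0 g$ if $h(\0)\neq 0$), so $\mu_\0(f_{|V(x)})$ equals that order minus $1$. For the polar term I would factor $V(f) = p\,V(g) + V(h)$ and use additivity, $\left(\Gamma^1_{f,x}\cdot V(f)\right)_\0 = p\left(V(\phi)\cdot V(g)\right)_\0 + \left(V(\phi)\cdot V(h)\right)_\0$. Restricting $\phi$ to each factor gives $\phi_{|V(g)} = p h g_y$ and $\phi_{|V(h)} = g h_y$, whence $\left(V(\phi)\cdot V(g)\right)_\0 = V(g,h)_\0 + \left(V(g)\cdot V(g_y)\right)_\0$ and $\left(V(\phi)\cdot V(h)\right)_\0 = V(g,h)_\0 + \left(V(h)\cdot V(h_y)\right)_\0$. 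The one nonelementary ingredient is the classical identity $\left(V(g)\cdot V(g_y)\right)_\0 = \mu_\0(g) + \mult_\0 g - 1$, which I would get by applying Teissier's trick (\cref{intnums}(1)) to the isolated singularity $g$: $\left(V(g_y)\cdot V(g)\right)_\0 = \left(V(g_y)\cdot V(x)\right)_\0 + \left(V(g_y)\cdot V(g_x)\right)_\0 = (\mult_\0 g - 1) + \mu_\0(g)$, and the analogous identity for the reduced factor $h$.

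Assembling these, the terms $p\,\mult_\0 g$ and $\mult_\0 h$ cancel against $\mu_\0(f_{|V(x)})$ and $\sum_C\mu^\circ_{{}_C}$, leaving $\beta_f = (p+1)V(g,h)_\0 + p\,\mu_\0(g) + \mu_\0(h) - 1$ when $h(\0)=0$. When $h(\0)\neq 0$ the factor $h$ is a unit near $\0$, so $V(h)$ and $V(g,h)_\0$ disappear and the $h$-terms drop out, giving $\beta_f = p\,\mu_\0(g)$. For the final assertion: if $h(\0)\neq 0$ then $\beta_f = p\,\mu_\0(g) = 0$ forces $\mu_\0(g)=0$, i.e. $V(g) = \Sigma f$ is smooth; if instead $h(\0) = 0$, then $V(g,h)_\0 \geq 1$ and $p \geq 2$ give $\beta_f \geq (p+1) - 1 \geq 2 > 0$, so $\beta_f = 0$ is impossible there. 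Hence $\beta_f = 0$ forces the smooth situation.

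I expect the main obstacle to be the bookkeeping of genericity and properness rather than any single deep step: one must justify that the generic choice of $z_0$ makes $g \nmid g_y$ (so $V(g)$ is not a spurious component of $V(\phi)$ and the equality $\Gamma^1_{f,x} = V(\phi)$ genuinely holds), that $V(x)$ meets $V(g)$ and $V(h)$ with multiplicity equal to their multiplicities, and that every intersection above is proper, so that additivity along $V(g^p h) = p\,V(g) + V(h)$ and the restrictions $\phi_{|V(g)}$, $\phi_{|V(h)}$ are legitimate. Once these routine but essential transversality checks are in place, the formula and the conclusion follow from the arithmetic above.
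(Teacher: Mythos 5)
Your proposal is correct, and its skeleton coincides with the paper's: you identify $\Gamma^1_{f,x} = V(phg_y + gh_y)$ and $\Lambda^1_{f,x} = (p-1)V(g)$ (so $\sum_C\mu^\circ_{{}_C} = p-1$), expand $\left(\Gamma^1_{f,x}\cdot V(f)\right)_\0$ along the cycle decomposition $V(f) = pV(g)+V(h)$, and evaluate $V(g,g_y)_\0$ and $V(h,h_y)_\0$ via Teissier's trick applied to $g$ and $h$ --- exactly as in the paper. The one genuine difference is the treatment of $\mu_\0\left(f_{|_{V(x)}}\right)$, and your route is cleaner. The paper writes $\mu_\0\left(f_{|_{V(x)}}\right) = \left(\Gamma^1_{f,x}\cdot V(x)\right)_\0 + (p-1)V(g,x)_\0$ and must then prove $\left(\Gamma^1_{f,x}\cdot V(x)\right)_\0 = V(gh,x)_\0 - 1$; this is the most delicate step of the paper's proof, requiring an explicit check that the lowest-order terms of $\left(phg_y\right)_{|_{V(x)}}$ and $\left(gh_y\right)_{|_{V(x)}}$ (namely $pn\,a_nb_m$ and $m\,a_nb_m$ in degree $n+m-1$) cannot cancel. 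You instead compute $\mu_\0\left(f_{|_{V(x)}}\right)$ directly as $\operatorname{ord}_y f(0,y) - 1 = pV(g,x)_\0 + V(h,x)_\0 - 1$, using only the fact that differentiation of a nonzero one-variable power series drops its order by exactly one in characteristic zero. This is the same underlying fact as the paper's no-cancellation lemma, but packaged so that no cancellation analysis is needed; what you give up is the intermediate identity $\left(\Gamma^1_{f,x}\cdot V(x)\right)_\0 = V(gh,x)_\0-1$, which the paper's route produces but which is not needed for the final formula.

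Two minor remarks. First, your transversality normalization (so that $V(g,x)_\0 = \mult_\0 g$ and $V(h,x)_\0 = \mult_\0 h$) is harmless, given the independence of $\beta_f$ from the generic linear form, but it is also unnecessary: these terms cancel in the final assembly for \emph{any} $x$ with $\dim_\0\Sigma\left(f_{|_{V(x)}}\right)=0$, which is all the paper assumes. Second, like the paper, you implicitly use that $h$ is reduced (equivalently, has an isolated critical point or is a unit) when writing $\mu_\0(h)$ and applying Teissier's trick to $h$; this is forced by the appearance of $\mu_\0(h)$ in the statement itself, so it is not a gap, but it is worth being aware that the argument (yours and the paper's) breaks down for non-reduced $h$, where $\Sigma f$ acquires extra components.
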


\begin{proof}
After a possible linear change of coordinates, we may assume that the first coordinate $x$ satisifes $\dim_\bb{0} \Sigma (f_{|_{V(x)}}) = 0$, so that $\dim_\bb{0}V(g,x) = \dim_\bb{0}V(h,x) = 0$ as well. 

As germs of sets at $\bb{0}$, the critical locus of $f$ is simply $V(g)$.  As cycles, 
\begin{align*}
V \left ( \frac{\pd f}{\pd y} \right ) &= \Gamma_{f,x}^1 + \Lambda_{f,x}^1 =  V \left ( p h g^{p-1} \frac{\pd g}{\pd y} + g^p \frac{\pd h}{\pd y} \right )\\
 &= V \left ( ph\frac{\pd g}{\pd y} + g \frac{\pd h}{\pd y} \right ) + (p-1)V(g), 
\end{align*}

so that $\Gamma_{f,x}^1 = V \left ( ph\frac{\pd g}{\pd y} + g \frac{\pd h}{\pd y} \right ) $ and $\Sigma f$ consists of a single component $C = V(g)$. It is a quick exercise to show that, for $g$ irreducible, $g$ does not divide $\frac{\pd g}{\pd y}$, and so the nearby Milnor number is precisely $\mu^\circ_{{}_C} = (p-1)$ along $V(g)$.  

Suppose first that $h(\0) = 0$.  

Then, by \cref{intnums}, 
\begin{align*}
\lm_{f,x}^0 - \lm_{f,x}^1 &= \left ( \Gamma_{f,x}^1 \cdot V(f) \right )_\bb{0} - \mu_\bb{0} \left ( f_{|_{V(x)}} \right ).
\end{align*}

We then expand the terms on the right hand side, as follows:
\begin{align*}
\left ( \Gamma_{f,x}^1 \cdot V(f) \right )_\bb{0} &= p \left (\Gamma_{f,x}^1 \cdot V(g) \right )_\bb{0} + \left (\Gamma_{f,x}^1 \cdot V(h) \right )_\bb{0} \\
&= p V \left( g,h\frac{\pd g}{\pd y} \right )_\bb{0} + V \left (h,g \frac{\pd h}{\pd y} \right )_\bb{0} \\
&= (p+1)V(g,h)_\bb{0} + pV\left (g,\frac{\pd g}{\pd y} \right )_\bb{0} + V \left (h,\frac{\pd h}{\pd y} \right )_\bb{0}.
\end{align*}
Since $\dim_\bb{0} V(g,x) = 0$ and $\dim_\bb{0} V(h,x) = 0$, the relative polar curves of $g$ and $h$ with respect to $x$ are, respectively, $\Gamma_{g,x}^1 = V \left (\frac{\pd g}{\pd y} \right )$ and $\Gamma_{h,x}^1 = V \left (\frac{\pd h}{\pd y} \right )$.  We can therefore apply Teissier's trick to this last equality to obtain
\begin{align*}
\left ( \Gamma_{f,x}^1 \cdot V(f) \right )_\bb{0} &= (p+1)V(g,h)_\bb{0} + p \left [ V \left (\frac{\pd g}{\pd y},x \right )_\bb{0} + \mu_\bb{0}(g) \right ] + \left [ V\left (\frac{\pd h}{\pd y},x \right )_\bb{0} + \mu_\bb{0}(h) \right ] \\
&= (p+1)V(g,h)_\bb{0} + p\mu_\bb{0}(g) + pV(g,x)_\bb{0} + V(h,x)_\bb{0} - (p+1).
\end{align*}

Next, we calculate the Milnor number of the restriction of $f$ to $V(x)$:
\begin{align*}
\mu_\bb{0} \left ( f_{|_{V(x)}} \right ) &= V \left ( \frac{\pd f}{\pd y},x \right )_\bb{0} = \left (\Gamma_{f,x}^1 \cdot V(x) \right )_\bb{0} + (p-1)V(g,x)_\bb{0}.
\end{align*}
Substituting these equations back into our initial identity, we obtain the following:
\begin{align*}
\lm_{f,x}^0 - \lm_{f,x}^1 &= (p+1)V(g,h)_\bb{0} + V(g,x)_\bb{0} + V(h,x)_\bb{0} \\
&+ p\mu_\bb{0}(g) + \mu_\bb{0}(h) - \left (\Gamma_{f,x}^1 \cdot V(x) \right )_\bb{0} - (p+1).
\end{align*}

We now wish to show that $\left (\Gamma_{f,x}^1 \cdot V(x) \right )_\0 = V(gh,x)_\bb{0} -1$.  To see this, we first recall that 
\begin{align*}
\left ( \Gamma_{f,x}^1 \cdot V(x) \right )_\0 &= \mult_y \left \{  \left (p h \cdot \frac{\pd g}{\pd y} \right )_{|_{V(x)}} + \left (g \cdot  \frac{\pd h}{\pd y} \right )_{|_{V(x)}} \right \},
\end{align*}
where $g_{|_{V(x)}}$ and $h_{|_{V(x)}}$ are (convergent) power series in $y$ with constant coefficients.  If the lowest-degree terms in $y$ of $\left (p h  \frac{\pd g}{\pd y} \right )_{|_{V(x)}}$ and $\left (g  \frac{\pd h}{\pd y} \right )_{|_{V(x)}}$ do not cancel each other out, then the $y$-multiplicity of their sum is the minimum of their respective $y$-multiplicities, both of which equal $V(gh,x)_\0 -1$.   We must show that no such cancellation can occur.  To this end, let $g_{|_{V(x)}} = \sum_{i \geq n} a_i y^i$ and $h_{|_{V(x)}} = \sum_{i \geq m} b_i y^i$ be power series representations in $y$, where $n = \mult_y g_{|_{V(x)}}$ and $m = \mult_y h_{|_{V(x)}}$ (so that $a_n,b_m \neq 0$).  Then, a quick computation shows that the lowest-degree term of $\left (p h \frac{\pd g}{\pd y} \right )_{|_{V(x)}}$ is $pn \, a_n b_m$, and the lowest-degree term of $\left (g \frac{\pd h}{\pd y} \right )_{|_{V(x)}}$ is $m \, a_n b_m$.  Consequently, no cancellation occurs, and thus $\left (\Gamma_{f,x}^1 \cdot V(x) \right )_\0 = V(gh,x)_\0 -1 = n+m-1$.  

Therefore, we conclude that 
\begin{align*}
\beta_f &= (p+1)V(g,h)_\bb{0} + p\mu_\bb{0}(g) + \mu_\bb{0}(h) -1.
\end{align*}
Since $V(g)$ and $V(h)$ have a non-empty intersection at $\bb{0}$, the intersection number $V(g,h)_\bb{0}$ is greater than one (so that $\beta_f > 0$).

Suppose now that $h(\0) \neq 0$.  Then, from the above calculations, we find
\begin{align*}
\left ( \Gamma_{f,x}^1 \cdot V(f) \right )_\0 &= p \mu_\0(g) + p V(g,x)_\0 - (p+1), \text{ and } \\
\mu_\0 \left ( f_{|_{V(x)}} \right ) &=  p V(g,x)_\0 - 1
\end{align*}
so that $\beta_f = p \mu_\0(g)$. 

\smallskip

Recall that, as $\Sigma f = V(g)$, the critical locus of $f$ is smooth at $\bb{0}$ if and only if $V(g)$ is smooth at $\bb{0}$; equivalently, if and only if the Milnor number of $g$ at $\bb{0}$ vanishes.  Hence, when $\Sigma f$ is \emph{not} smooth at $\bb{0}$, $\mu_\bb{0}(g) > 0$, and we find that $\beta_f > 0$, as desired. 
\end{proof}

\medskip

\begin{rem}
Suppose that $f(x,y)$ is of the form $f = gh$, where $g$ and $h$ are relatively prime, and both have isolated critical points at the origin. Then, $f$ has an isolated critical point at $\bb{0}$ as well, and the same computation in \cref{nonred} (for $\mu_\0(f)$ instead of $\beta_f$) yields the formula
\begin{align*}
\mu_\bb{0}(f) = 2V(g,h)_\bb{0} + \mu_\bb{0}(g) + \mu_\bb{0}(h) -1.
\end{align*}
Thus, the formula for $\beta_f$ in the non-reduced case collapses to the ``expected value" of $\mu_\0(f)$  exactly when $p=1$ and $f$ has an isolated critical point at the origin.   
\end{rem}

\bigskip

\printbibliography

\end{document}